\renewcommand{\Re}{\operatorname{Re}}
\renewcommand{\min}{\operatorname{min}}
\renewcommand{\max}{\operatorname{max}}
\renewcommand{\sup}{\operatorname{Sup}}
\newcommand{\s}{{\sigma}}
 \renewcommand{\a}{\alpha}
\renewcommand{\b}{\beta}
\renewcommand{\d}{{\delta}}
\newcommand{\G}{\Gamma}
\renewcommand{\l}{\lambda}
\newcommand{\z}{\zeta}
\newcommand{\fourpartdef}[8]
{
	\left\{
		\begin{array}{ll}
			#1 & \mbox{if } #2 \\
			#3 & \mbox{if } #4\\
			#5 & \mbox{if } #6\\
			#7 & \mbox{if } #8
		\end{array}
	\right.
}
\newcommand{\twopartdef}[4]
{
	\left\{
		\begin{array}{ll}
			#1 & \mbox{if } #2 \\
			#3 & \mbox{if } #4
		\end{array}
	\right.
}
\renewcommand{\(}{\left\(}
\renewcommand{\)}{\right\)}
\renewcommand{\pmod}[1]{\,(\textup{mod}\,#1)}
\numberwithin{equation}{section}
 \theoremstyle{plain}
\newtheorem{theorem}{Theorem}[section]
\newtheorem{lemma}[theorem]{Lemma}
\newtheorem{proposition}[theorem]{Proposition}
\begin{document}
\title[Moments of averages of generalized Ramanujan sums]{Moments of averages of generalized Ramanujan sums}
\author{Nicolas Robles}
\address{Institut f\"{u}r Mathematik, Universit\"{a}t Z\"{u}rich Winterthurerstrasse 190 \\ CH-8057 Z\"{u}rich, Switzerland\\
}
\email{nicolas.robles@math.uzh.ch} 
\author{Arindam Roy}
 \address{Department of Mathematics, University of Illinois, 1409 W. Green Street \\ Urbana, IL 61801, United States}
\email{roy22@illinois.edu}
\thanks{2010 \textit{Mathematics Subject Classification.} Primary: 11M06, 11N37; Secondary: 11L03, 11N56, 11N64.\\
\textit{Keywords and phrases.} Generalized Ramanujan sum; Ramanujan's formula; divisor problem; Perron's formula; moments estimates; asymptotic formulas.}
\maketitle
\begin{abstract}
	Let $\beta$ be a positive integer. A generalization of the Ramanujan sum due to Cohen is given by 
	\begin{align}
		c_{q,\beta }(n) := \sum\limits_{{{(h,{q^\beta })}_\beta } = 1} {{e^{2\pi inh/{q^\beta }}}}, \nonumber
	\end{align}
	where $h$ ranges over the the non-negative integers less than $q^{\beta}$ such that $h$ and $q^{\beta}$ have no common $\beta$-th power divisors other than $1$. 
 The distribution of the average value of the Ramanujan sum is a subject of extensive research. In this paper, we study the distribution of the average value of $c_{q,\beta }(n)$ by computing the $k$-th moments of the average value of $c_{q,\beta }(n)$. In particular we have provided the first and second moments with improved error terms. We give more accurate results for the main terms than our predecessors. We also provide an asymptotic result for an extension of a divisor problem and for an extension of Ramanujan's formula.   
\end{abstract}
\section{Introduction}
In \cite{RamanujanTrigonometrical}, Ramanujan introduced a trigonometrical sum
\begin{align} 
{c_q}(n) :=\sum\limits_{(h,q) = 1}\cos\left(\frac{2\pi nh }{q}\right)= \sum\limits_{(h,q) = 1} {{e^{2\pi inh/q}}},\label{ramanujansum}
\end{align}
where $q$ and $n$ are positive integers. 
Ramanujan sums fit naturally with other arithmetical functions. For instance, one has 
\begin{align}
c_q(1)=\mu(q)\quad\mbox{and}\quad  c_q(q)=\phi(q), \nonumber
\end{align}
where $\mu(n)$ and $\phi(n)$ are the M\"{o}bius and Euler totient functions, respectively. Moreover, if $(q,r)=1$, then $c_q(n)c_r(n)=c_{qr}(n)$. In the same article, Ramanujan obtained expressions of the form 
\begin{align}
f(n)=\sum_{q=1}^{\infty}a_qc_q(n)\label{ramexpres}
\end{align}
for some arithmetical functions $ a_q $. In particular, 
\begin{align} 
  \sum_{q=1}^{\infty} {\frac{{{c_q}(n)}}{q}}=0, \quad \sum_{q=1}^{\infty} {\frac{{\log q}}{q}{c_q}(n)}=-d(n),\quad\sum_{q=1}^{\infty} {\frac{{{{( - 1)}^{q - 1}}{c_{2q - 1}}(n)}}{{2q - 1}}}=\frac{1}{\pi}r(n) \label{formulae1}
\end{align}
as well as
\begin{align}
\sum_{q=1}^{\infty} {\frac{{{c_q}(n)}}{{{q^{s+1}}}}}=\frac{\sigma_{-s} (n)}{\z(s+1)}\quad\text{and}\quad
\sum_{q=1}^{\infty} {\frac{{\mu (q){c_q}(n)}}{{{\phi _{s+1}}(q)}}}=\z(s+1)\frac{\phi_s(n)}{n^s}\label{formulae2}
\end{align}
for $\Re(s)>0$. 
Here $d(n)$ is the number of divisor of $n$, $\sigma_s(n)$ the sum of their $s$-th powers,
\begin{align}
{\phi _s}(n) = {n^s}\left( {1 - \frac{1}{{{p_1^s}}}} \right)\left( {1 - \frac{1}{{p_2^s}}} \right) \cdots\left( {1 - \frac{1}{{p_k^s}}} \right) \nonumber
 \end{align}
when $n=p_1^{a_1} p_2^{a_2}\cdots p_k^{a_k}$, 
and $r(n)$ is the number of representations of $n$ as the sum of two squares. Also he showed that
\begin{align}
  \sum\limits_{n = 1}^\infty  {\frac{{{c_q}(n)}}{n}}=-\Lambda (q)\quad\text{and}\quad\sum_{\substack{d\mid n \\ d\mid q}}d\mu(q/d)=c_q(n),\label{formulae3}
\end{align} 
where $\Lambda(n)$ is the von Mangoldt function. 

The second equation of \eqref{formulae1} is of the same depth as the prime number theorem. As discussed by Hardy and Wright in \cite{HardyWright}, these series have a particular interest because they show explicitly the source of the irregularities in the behavior of their sums.
Note that the Ramanujan expansion \eqref{ramexpres} mimics the notion of a Fourier expansion of an $L^1$-function. 
In \cite{Carmichael}, Carmichael  noticed an orthogonality principle of Ramanujan sums. This allows one to predict the Ramanujan coefficients $a_q$ in \eqref{ramexpres} of an arithmetical function $f(n)$ if such expansion exists. The work of 
Wintner \cite{Wintner}  and Delange \cite{DelangeRam} allows us to determine a large number of Ramanujan expansions.
Later on more work was done in this direction by  Delange \cite{Delange}, Wirsing \cite{Wirsing}, Hildebrand \cite{Hildebrand}, Schwarz \cite{Schwarz}, Lucht and Reifenrath \cite{LuchtReifenrath}.

Ramanujan sums and their variations make surprising appearances in singular series of the Hardy-Littlewood asymptotic formula for Waring problems and in the asymptotic formula of Vinogradov on sums of three primes, for details the reader is referred to \cite{Davenport}.

Recently, Alkan \cite{Alkan} studied the weighted averages of Ramanujan sums. He showed that for integer $r \ge 1$ and $x \ge 1$ one has
\begin{align} 
& \sum_{k \le x} \bigg( \frac{1}{k^{r+1}} \sum_{j=1}^k j^r c_k(j) \bigg) = 1 + \frac{1}{2} \sum_{2 \le k \le x} \frac{\phi(k)}{k} + \frac{1}{1+r} \sum_{m=1}^{[\tfrac{r}{2}]} \binom{r+1}{2m} B_{2m} \sum_{2 \le k \le x} \prod_{p|k} \bigg(1 -\frac{1}{p^{2m}} \bigg),  \nonumber 
\end{align}
where $B_{2m} \ne 0$ are the Bernoulli numbers together with the convention that the sum over $m$ is taken to be zero when $r = 1$ and the sums over $k$ are taken to be zero when $1 \le x < 2$.

In \cite{ChanKumchev}, Chan and Kumchev studied moments of averages of Ramanujan sums. They showed that for $y \ge x$ one has
\begin{align} \label{chankumchev1}
\sum_{n \le y} \sum_{q \le x} c_q(n) = y - \frac{x^2}{4\zeta(2)} + O(xy^{1/3}\log x +x^3y^{-1}),
\end{align}
as well as
\begin{align} 
\sum_{n \le y} \bigg(\sum_{q \le x} c_q(n)\bigg)^2 = \frac{yx^2}{2\zeta(2)} + O(x^4 + xy \log x) \nonumber 
\end{align}
for $y \ge x^2 (\log x)^B$ for $B>0$, and lastly for $x \le y \le x^2(\log x)^B$
\begin{align} \label{chankumchev2b}
\sum_{n \le y} \bigg(\sum_{q \le x} c_q(n)\bigg)^2 = \frac{yx^2}{2\zeta(2)}(1+2 \kappa(u)) + O(yx^2 (\log x)^{10} (x^{-1/2}+(y/x)^{-1/2}) ),
\end{align}
where $u = \log(yx^{-2})$ and $\kappa(u)$ is a certain Fourier integral given by 
\begin{align} 
\kappa(u) := \frac{1}{2\pi} \int_{-\infty}^{\infty} f(it)e^{-itu}dt, \quad \textnormal{where} \quad f(s) := \frac{\zeta(1-s)}{\zeta(1+s)}\frac{1}{(1+s)^2(1-s)}. \nonumber 
\end{align}
It satisfies some numerical inequalities given in \cite{ChanKumchev} and in particular $\kappa(u)=o(1)$. 

Let $\beta$ be a positive integer. A generalization of the Ramanujan sum due to Cohen \cite{CohenExtension} is written as $c_{q,\beta }(n)$ and it is defined by
\begin{align} 
c_{q,\beta }(n): = \sum\limits_{{{(h,{q^\beta })}_\beta } = 1} {{e^{2\pi inh/{q^\beta }}}},\label{cohendef}
\end{align}
where $h$ ranges over the the non-negative integers less than $q^{\beta}$ such that $h$ and $q^{\beta}$ have no common $\beta$-th power divisors other than $1$.
It follows immediately that when $\beta = 1$, \eqref{cohendef} becomes the Ramanujan sum \eqref{ramanujansum}. Clearly this generalization of the Ramanujan sum is as important as Ramanujan sum by its arithmetic nature. For more arithmetic properties of the generalized Ramanujan sum \eqref{cohendef}, the reader is referred to \cite{CohenExtension}. For a discussion of the connections between the generalized Ramanujan sums due to Cohen and the non-trivial zeros of the Riemann zeta-function the reader is referred to \cite{KuhnRobles}.

Let us now introduce the main object of study of this paper. The $k^{\operatorname{th}}$ moment of the average of the generalized Ramanujan sum \eqref{cohendef} is defined by
\begin{align}
 C_{k,\beta}(x,y):= \sum\limits_{n \leqslant y} {{{\bigg( {\sum\limits_{q \leqslant x} {{c_{q,\beta}}(n)} } \bigg)}^k}},\label{kthmoment}
\end{align}
where $k$ is a positive integer and $x$ and $y$ are reals.
It is not to difficult to obtain an asymptotic result for the $k^{\operatorname{th}}$ moment of \eqref{kthmoment}. In particular we have
\begin{proposition}\label{rr01}
Let $k$ and $\beta$ be two positive integers. Let $y>x^{k(1+\beta)}\log^{k+1}x$, then 
\begin{align}
C_{k,\beta }(x,y) &= A_{k,\beta}(x,y) + O\left(x^{k(1+\beta)}\log^k x\right), \nonumber 
\end{align}
where
\begin{align}
A_{k,\beta}(x,y)=\twopartdef{y,}{k=1,}{\frac{{y{x^{1 + \beta }}}}{{(1 + \beta )\zeta (1 + \beta )}}+O(yx^\b\log^{\lfloor 1/\b\rfloor}x),}{k>1} \nonumber
\end{align}
\end{proposition}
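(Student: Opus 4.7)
The plan is to unfold $c_{q,\b}(n)$ via its M\"obius-type expansion, rewrite the $k$-th moment as an explicit multi-dimensional divisor sum, and extract the main term by means of Jordan's totient.

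Start from the identity $c_{q,\b}(n)=\sum_{d\mid q,\ d^\b\mid n}d^\b\mu(q/d)$, proved by M\"obius inversion of $\sum_{d\mid q}c_{d,\b}(n)=q^\b\,[q^\b\mid n]$. Swapping the order of summation gives
\[
\sum_{q\le x}c_{q,\b}(n)=\sum_{\substack{d\le x\\ d^\b\mid n}}d^\b M(x/d),\qquad M(y):=\sum_{m\le y}\mu(m).
\]
Raising to the $k$-th power and summing over $n\le y$ converts $C_{k,\b}(x,y)$ into
\[
\sum_{d_1,\ldots,d_k\le x}\Bigl(\prod_{i=1}^k d_i^\b\, M(x/d_i)\Bigr)\Bigl\lfloor \frac{y}{[d_1,\ldots,d_k]^\b}\Bigr\rfloor.
\]
Writing the floor as $y/L^\b+O(1)$ with $L=[d_1,\ldots,d_k]$, the $O(1)$ piece is bounded by $\prod_i\sum_{d\le x}d^\b|M(x/d)|\ll x^{k(1+\b)}\log^k x$ via $|M(y)|\le y$, matching the overall error in the statement.

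The main piece equals $y\cdot S_k(x)$, where
\[
S_k(x):=\sum_{d_1,\ldots,d_k\le x}\frac{\prod_i d_i^\b}{[d_1,\ldots,d_k]^\b}\prod_i M(x/d_i).
\]
For $k=1$ the ratio equals $1$, and $S_1(x)=\sum_{d\le x}M(x/d)=1$ by the identity $\mu\ast 1=\delta_1$; hence $A_{1,\b}=y$. For $k\ge 2$ the pointwise nested identity
\[
\frac{d_1\cdots d_k}{[d_1,\ldots,d_k]}=(d_1,d_2)\cdot\bigl([d_1,d_2],d_3\bigr)\cdots\bigl([d_1,\ldots,d_{k-1}],d_k\bigr)
\]
reduces the weight to a product of $k-1$ nested gcd's. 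Expanding each gcd using Jordan's totient via $(a,b)^\b=\sum_{g\mid(a,b)}J_\b(g)$, interchanging summations, and applying the collapsing identity
\[
\sum_{\substack{d\le x\\ g\mid d}}M(x/d)=1\qquad(g\le x)
\]
to the innermost $d_i$'s produces, for $k=2$, the clean expression $S_2(x)=\sum_{g\le x}J_\b(g)$, whose classical asymptotic
\[
\sum_{g\le x}J_\b(g)=\frac{x^{1+\b}}{(1+\b)\z(1+\b)}+O\bigl(x^\b\log^{\lfloor 1/\b\rfloor}x\bigr)
\]
delivers the advertised main term. For $k\ge 3$, the same collapsing is iterated; the combinatorics of the nested gcd and Jordan expansions can be managed by induction on $k$ to identify the leading contribution, which again reduces to a sum of the same shape.

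The main obstacle is the bookkeeping for $k\ge 3$: one must track how the nested Jordan variables interact with the lcm constraints after each successive $M$-sum collapse, and verify that after all collapses the surviving contribution matches the advertised main term up to the stated error $O(yx^\b\log^{\lfloor 1/\b\rfloor}x)$. Combining the extracted main term with the error term, and using the hypothesis $y>x^{k(1+\b)}\log^{k+1}x$ to ensure the latter is dominated by the former, yields the asymptotic.
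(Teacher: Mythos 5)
Your reduction follows the same outline as the paper's: unfold $c_{q,\beta}(n)$ via Lemma \ref{cohen02}, pull the $n$-sum inside to produce $\lfloor y/[d_1,\dots,d_k]^{\beta}\rfloor$, bound the fractional-part contribution trivially by $O(x^{k(1+\beta)}\log^k x)$, and evaluate the arithmetic weight $\prod_i d_i^{\beta}/[d_1,\dots,d_k]^{\beta}$. For $k=1$ and $k=2$ your argument is complete and in fact tidier than the paper's: the collapsing identity $\sum_{d\le x,\ g\mid d}M(x/d)=1$ together with $(d_1,d_2)^{\beta}=\sum_{g\mid(d_1,d_2)}J_{\beta}(g)$ gives $S_2(x)=\sum_{g\le x}J_{\beta}(g)=\sum_{de\le x}d^{\beta}\mu(e)$, which is exactly the sum the paper evaluates in \eqref{dbmu}.

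The gap is at $k\ge 3$, and it is more than bookkeeping. Once you expand $\bigl([d_1,\dots,d_{j-1}],d_j\bigr)^{\beta}=\sum_{h}J_{\beta}(h)$ over $h$ dividing that gcd, the condition $h\mid[d_1,\dots,d_{j-1}]$ does not factor into separate conditions on the individual $d_i$, so the collapsing identity applies only to the innermost variable $d_j$; what remains is a sum over $d_1,\dots,d_{j-1}$ carrying an entangled lcm-divisibility constraint, not a sum ``of the same shape,'' and the proposed induction has no base to stand on. Moreover, the diagonal $d_1=\cdots=d_k=d$ with $x/2<d\le x$ (where $M(x/d)=1$) already contributes $\sum_{x/2<d\le x}d^{(k-1)\beta}\gg x^{1+(k-1)\beta}$ to $S_k(x)$, which for $k\ge 3$ dwarfs the claimed main term of order $x^{1+\beta}$; so the advertised asymptotic could only emerge from cancellation against off-diagonal terms, and your proposal supplies no mechanism for it. You are in good company: the paper's own proof passes from $\prod_i d_i^{\beta}\bigl\lfloor y/[d_1^{\beta},\dots,d_k^{\beta}]\bigr\rfloor$ to $y\,(d_1^{\beta},\dots,d_k^{\beta})$ plus an error, i.e.\ it uses $\prod_i d_i^{\beta}/[d_1^{\beta},\dots,d_k^{\beta}]=(d_1^{\beta},\dots,d_k^{\beta})$, which agrees with your (correct) nested-gcd product only for $k\le 2$ — already $d_1=d_2=2$, $d_3=1$ gives $2$ on the left and $1$ on the right. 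So the step you flag as an ``obstacle'' is the very step the paper glosses over, and neither argument establishes the proposition for $k\ge 3$ as stated.
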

For the first and second moments one can improve the error terms as well as clarify the dependence between the parameters $y$ and $x$. Our main results are following.
\begin{theorem} \label{maintheorem}
Let $y \ge  x^{3\beta/2}\log^5 x$. Then for $\beta=1,2$  one has
\begin{align} 
C_{1,\beta }(x,y) = y - \frac{{{x^{1 + \beta }}}}{{2(1 + \beta )\zeta (1 + \beta )}} + O(x^\b y^{1/3}\log^4y+x^{2\b+1}y^{-2/3}+x^{\b+1}y^{-1/3}). \nonumber 
\end{align}
and for $\b\ge 3$ one has
\begin{align} 
C_{1,\beta }(x,y) = y + O(x^\b y^{1/3}\log^4y).\nonumber 
\end{align}
\end{theorem}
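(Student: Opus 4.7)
The plan is to apply Cohen's identity
\[
c_{q,\b}(n) = \sum_{\substack{d\mid q\\ d^\b\mid n}} d^\b\mu(q/d)
\]
to collapse $C_{1,\b}(x,y)$ into a single $d$-sum, extract the two main terms by isolating the average of the fractional part, and then bound the oscillatory remainder by exponential sum methods. Swapping the order of summation and writing $q=dm$ gives
\[
C_{1,\b}(x,y) = \sum_{d\le x} d^\b \lfloor y/d^\b\rfloor\, M(x/d), \qquad M(u):=\sum_{m\le u}\mu(m),
\]
and the hypothesis $y\ge x^{3\b/2}\log^5 x$ forces $d^\b\le x^\b\le y$ on the full range of $d$, so no boundary cases appear. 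Decomposing $\lfloor y/d^\b\rfloor = y/d^\b - \tfrac12 - \psi(y/d^\b)$, with $\psi(t)=\{t\}-\tfrac12$, then splits $C_{1,\b}(x,y)$ into three pieces.

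The $y/d^\b$ piece gives $y\sum_{d\le x}M(x/d) = y$ via the elementary identity $\sum_{d\le x}M(x/d)=1$. The $-\tfrac12$ piece collapses back, via $dm=q$, to $-\tfrac12\sum_{q\le x}J_\b(q)$ where $J_\b=\mathrm{Id}^\b*\mu$ is the Jordan totient. Applying the convolution a second time and using Euler--Maclaurin on $\sum_{k\le x/d}k^\b$ together with $\sum_{d\le x}\mu(d)/d^{1+\b} = 1/\z(1+\b)+O(x^{-\b})$ yields
\[
-\tfrac12\sum_{q\le x}J_\b(q) = -\frac{x^{1+\b}}{2(1+\b)\z(1+\b)} + O\bigl(x^\b\log^{\lfloor 1/\b\rfloor} x\bigr),
\]
which is the second displayed main term. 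For $\b\ge 3$ the hypothesis $y\ge x^{3\b/2}$ gives $x^{1+\b}\le x^{3\b/2}\le x^\b y^{1/3}$, so this contribution is absorbed into the error and the statement reduces to $y+O(x^\b y^{1/3}\log^4 y)$.

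The remaining piece, and the main technical step, is the bound for the $\psi$-sum
\[
R(x,y) := \sum_{d\le x} d^\b\psi(y/d^\b)\, M(x/d) = \sum_{m\le x}\mu(m) \sum_{d\le x/m} d^\b\psi(y/d^\b),
\]
where the second equality comes from opening $M(x/d)=\sum_{m\le x/d}\mu(m)$. I would estimate the inner sum $S(u;y):=\sum_{d\le u} d^\b\psi(y/d^\b)$ by truncating the Fourier expansion
\[
\psi(t) = -\sum_{0<|h|\le H}\frac{e(ht)}{2\pi ih} + O\Bigl(\min\bigl(1,\tfrac{1}{H\|t\|}\bigr)\Bigr),
\]
reducing the problem to exponential sums $\sum_{d\le u} d^\b e(hy/d^\b)$ with a stationary phase at $d\asymp(\b hy)^{1/(1+\b)}$. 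These are handled by the van der Corput $B$-process (Poisson summation in $d$); optimizing the truncation parameter as $H\asymp(y/u^\b)^{1/3}$ should produce a bound of the shape
\[
S(u;y) \ll u^\b y^{1/3}\log^3 y + u^{2\b+1} y^{-2/3} + u^{\b+1} y^{-1/3},
\]
and summing against $\mu(m)$ over $m\le x$ then recovers the three error terms displayed in the theorem (with an extra factor of $\log x$ coming from $\sum_{m\le x}1/m$ in the $\b=1$ case, absorbed into $\log^4 y$).

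The main obstacle is this exponential sum estimate: tracking the saddle-point main term together with the Vaaler tail precisely enough to produce the two secondary errors $x^{2\b+1}y^{-2/3}$ and $x^{\b+1}y^{-1/3}$ requires a careful choice of $H$ and a clean execution of the $B$-process. The range condition $y\ge x^{3\b/2}\log^5 x$ is exactly what is needed so that the optimal $H$ satisfies $H\gg 1$ and the Voronoi-type secondary terms do not interfere with the displayed asymptotic.
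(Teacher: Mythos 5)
Your strategy is sound, but it is not the route the paper takes for Theorem \ref{maintheorem}: it is, almost verbatim, the route of the paper's own Appendix (Theorem \ref{appendixtheorem}). The paper proves Theorem \ref{maintheorem} analytically, by applying Perron's formula (Lemma \ref{montvon}) to the Dirichlet series $\sum_q c_{q,\beta}(n)q^{-\beta s}=\sigma_{1-s,\beta}(n)/\zeta(\beta s)$ of Lemma \ref{cohen01}, summing over $n\le y$ with the asymptotic for $\sum_{n\le y}\sigma_{1-s,\beta}(n)$ from Theorem \ref{theormsigma}, and then shifting contours; the three error terms $x^{\beta}y^{1/3}\log^4y$, $x^{2\beta+1}y^{-2/3}$ and $x^{\beta+1}y^{-1/3}$ are artifacts of that argument (the first from the $y^{1/3}\log^2y$ error of Theorem \ref{theormsigma} integrated along $\operatorname{Re}(s)=1+1/\log y$, the other two from the horizontal and right vertical segments with the choice $T=y^{2/3}$), and the hypothesis $y\ge x^{3\beta/2}\log^5x$ is tied to that contour geometry. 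Your elementary decomposition $\lfloor y/d^{\beta}\rfloor=y/d^{\beta}-\tfrac12-\psi(y/d^{\beta})$ followed by exponent pairs is exactly how the paper derives Theorem \ref{appendixtheorem}, which holds in the wider range $y\ge x^{\beta}$ and whose error term $x^{(1+2\beta)/3}y^{1/3}\mathcal{E}(x,\tfrac{1+2\beta}{3})+x^{1+2\beta}y^{-1}$ is in fact smaller than the error of Theorem \ref{maintheorem} throughout the stated range; so your route, properly executed, does yield the theorem, and buys a cleaner bound on a wider range, at the cost of not feeding into the second-moment machinery the way the Perron approach does.

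The one step you should not leave as asserted is the intermediate bound $S(u;y)\ll u^{\beta}y^{1/3}\log^3y+u^{2\beta+1}y^{-2/3}+u^{\beta+1}y^{-1/3}$. The van der Corput $B$-process (equivalently the exponent pair $(\tfrac12,\tfrac12)$, i.e.\ Lemma \ref{exppairlemma} with $k=l=\tfrac12$) applied to $\sum_{d\in\mathbf{I}}\psi(y/d^{\beta})$ on $\mathbf{I}\subset(N,2N]$ gives $y^{1/3}N^{(1-\beta)/3}+y^{-1}N^{1+\beta}$, and after partial summation and dyadic summation this yields $S(u;y)\ll u^{(1+2\beta)/3}y^{1/3}+u^{1+2\beta}y^{-1}$ --- a different shape from what you wrote (stronger in the leading term when $\beta\ge2$, and with a single tail $u^{1+2\beta}y^{-1}$). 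The exponents $y^{-2/3}$ and $y^{-1/3}$ do not fall out of the optimization of $H$; they appear to be reverse-engineered from the statement of the theorem. Since the bound the method actually delivers is stronger than the one you claim, your argument survives, but you should state and sum the genuine bound rather than attribute the theorem's particular error terms to the $B$-process.
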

\begin{theorem}\label{maintheorem02}
For $\b=1,2$ and $x^{2\b}<y<x^{2\b+\b^2}\log^{\frac{5}{2}(\b+1)}x$ one has
\begin{align} 
C_{2,\beta }(x,y) &= \frac{{y{x^{1 + \beta }}}}{{(1 + \beta )\zeta (1 + \beta )}} - \frac{1}{2}\frac{{{x^{2 + 2\beta }}}}{{{{(1 + \beta )}^2}{\zeta ^2}(1 + \beta )}}+O(y^{-1}x^{2+4\b}+x^{2\b+1}(\log x +\log\log x)) \nonumber \\&\quad+O\bigg(x^{2\b}y^{\frac{1}{3}+\frac{1}{6\b}}(\log^5y)\log\log y(\log^4 x+\log^4\log x)+yx^{\frac{1}{2}+\b}(\log^3x+\log^3\log x)\bigg). \nonumber 
\end{align}
For $\b\ge 3$ and $y>x^{3\b/2}$ one has
\begin{align} 
C_{2,\beta }(x,y) = \frac{{y{x^{1 + \beta }}}}{{(1 + \beta )\zeta (1 + \beta )}}&+O\bigg(x^{2\b}y^{\frac{1}{3}+\frac{1}{6\b}}(\log^5y)\log\log y(\log^4 x+\log^4\log x)\bigg) \nonumber \\
&+O\bigg(yx^{\frac{1}{2}+\b}(\log^3x+\log^3\log x)\bigg). \nonumber
\end{align}
\end{theorem}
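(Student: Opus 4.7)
Starting from Cohen's identity
\[
c_{q,\beta}(n) \;=\; \sum_{\substack{d \mid q \\ d^\beta \mid n}} d^\beta \mu(q/d),
\]
I swap the order of summation to obtain
\[
\sum_{q \le x} c_{q,\beta}(n) \;=\; \sum_{\substack{d \le x \\ d^\beta \mid n}} d^\beta M(x/d), \qquad M(t) := \sum_{m \le t} \mu(m).
\]
Squaring this identity, summing over $n \le y$, and using $[d_1^\beta, d_2^\beta] = [d_1, d_2]^\beta$, one finds
\[
C_{2,\beta}(x,y) \;=\; \sum_{d_1, d_2 \le x} d_1^\beta d_2^\beta\, M(x/d_1)\, M(x/d_2) \Big\lfloor y / [d_1, d_2]^\beta \Big\rfloor.
\]
My plan is to write $\lfloor t \rfloor = t - \tfrac{1}{2} - \psi(t)$, with $\psi(t) := \{t\} - \tfrac{1}{2}$, and treat the three resulting pieces separately.

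For the main piece I exploit $d_1^\beta d_2^\beta/[d_1,d_2]^\beta = (d_1, d_2)^\beta$, set $d_1 = a g$, $d_2 = b g$ with $(a,b) = 1$, and remove the coprimality condition by M\"obius inversion. Combined with the elementary identity $\sum_{d \le t} M(t/d) = 1$ for $t \ge 1$, the inner double sum collapses, reducing the main piece to
\[
y \sum_{g \le x} g^\beta\, M(x/g) \;=\; y \sum_{n \le x} J_\beta(n),
\]
where $J_\beta$ is the Jordan totient. The standard asymptotic for $\sum_{n \le x} J_\beta(n)$, best obtained by applying Perron's formula to $\zeta(s-\beta)/\zeta(s)$, then yields the leading term $y x^{\beta+1}/((\beta+1)\zeta(\beta+1))$ of Theorem \ref{maintheorem02}. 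The constant piece reads
\[
-\tfrac{1}{2} \Big(\sum_{d \le x} d^\beta M(x/d)\Big)^2,
\]
and squaring the same Jordan-totient asymptotic produces both the subtracted term $-\tfrac{1}{2}\, x^{2+2\beta}/((\beta+1)^2 \zeta^2(\beta+1))$ and the $O(x^{2\beta+1}(\log x + \log\log x))$ error from the theorem, provided I invoke the sharpest available bound for the error in $\sum_{n \le x} J_\beta(n)$.

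The principal obstacle is the oscillating piece
\[
R \;:=\; \sum_{d_1, d_2 \le x} d_1^\beta d_2^\beta\, M(x/d_1)\, M(x/d_2)\, \psi\!\left(y / [d_1, d_2]^\beta\right),
\]
which is the $\beta$-th power analogue of the Dirichlet divisor problem. I reorganize $R$ by the value $L = [d_1, d_2]$ so that it takes the shape $\sum_L b_\beta(L)\, \psi(y/L^\beta)$, where $b_\beta(L)$ is an explicit arithmetic weight assembled from $J_\beta$ and $M$. A Vaaler-type truncated Fourier expansion of $\psi$, together with effective Perron summation, then rewrites each piece of $R$ as an exponential sum of the form $\sum_L b_\beta(L)\, \exp(2\pi i m y / L^\beta)$ to which the van der Corput exponent-pair hierarchy can be applied; the exponent $y^{1/3 + 1/(6\beta)}$ appearing in the theorem is exactly the van der Corput bound adapted to a phase of the form $y/L^\beta$, and the $(\log^5 y)(\log\log y)$ factor reflects the standard losses in truncating Perron together with Vaaler. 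The complementary $y x^{1/2+\beta}$ contribution arises from mean-square bounds on $M(t)$ obtained from the Perron formula for $1/\zeta(s)$ shifted to $\Re s = 1/2$. Finally, the case split $\beta \in \{1,2\}$ versus $\beta \ge 3$, and the range $x^{2\beta} < y < x^{2\beta+\beta^2} \log^{\frac{5}{2}(\beta+1)} x$, are dictated by whether the secondary main term still dominates the oscillating error bound. Collecting all three contributions then proves both statements of the theorem.
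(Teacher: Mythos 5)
Your reduction of $C_{2,\beta}(x,y)$ via $c_{q,\beta}(n)=\sum_{d\mid q,\ d^\beta\mid n}d^\beta\mu(q/d)$ and $\lfloor t\rfloor=t-\tfrac12-\psi(t)$ is sound, and your elementary evaluation of the two non-oscillating pieces (collapsing the gcd-weighted double sum to $y\sum_{n\le x}J_\beta(n)$, then squaring that asymptotic for the constant piece) correctly recovers both main terms and the $O(x^{2\beta+1}\log x)$ error. This is a genuinely different route from the paper, which never touches $M(t)$ at all: it writes $\sum_{q^\beta\le x}c_{q,\beta}(n)$ as a Perron integral of $\sigma_{1-s,\beta}(n)/\zeta(\beta s)$ (Lemmas \ref{cohen01} and \ref{montvon}), squares the contour integral, sums over $n$ using Theorem \ref{theormsigma2}, and then evaluates four double integrals $I_1,\dots,I_4$ by residue shifts; crucially, every $1/\zeta$ appearing there has argument with real part exceeding $1$.

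The treatment of the oscillating piece $R$, however, has a genuine gap in two places. First, the claim that the $yx^{1/2+\beta}$ term comes from ``mean-square bounds on $M(t)$ obtained from the Perron formula for $1/\zeta(s)$ shifted to $\Re(s)=1/2$'' is not available unconditionally: $1/\zeta(s)$ has a pole at every nontrivial zero of $\zeta$, so that contour shift cannot be performed, and the mean-square estimate you would need, $\int_1^X(M(t)/t)^2\,dt\ll\log X$, is the weak Mertens hypothesis, which implies RH. In the paper the $yx^{1/2+\beta}$ term arises from $I_{11}$, where the only zeta in a denominator is $\zeta(\beta(s_1+s_2))$ with real part of its argument larger than $1$, while the numerator $\zeta(\beta(s_1+s_2-1))$ sits on the critical line and is handled by the classical second moment; no information about $M$ is used. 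Second, the assertion that exponent pairs applied to $\sum_L b_\beta(L)\psi(y/L^\beta)$ yield $x^{2\beta}y^{1/3+1/(6\beta)}$ is unsubstantiated: the weights $b_\beta(L)=\sum_{[d_1,d_2]=L}d_1^\beta d_2^\beta M(x/d_1)M(x/d_2)$ are highly irregular, their absolute sum is as large as $x^{2\beta+2}$ (the trivial bound on $R$, which at the lower end $y\approx x^{2\beta}$ is comparable to the main term), and no partial-summation scheme is indicated that would let van der Corput act on the $L$-variable without first beating the trivial bound on $M$. Moreover the exponent $\tfrac13+\tfrac1{6\beta}$ is not ``exactly the van der Corput bound'': in the paper it arises as $y^{\lambda}T^{1/2-\lambda}$ with $\lambda\approx 1/(2\beta)$ and Perron height $T\approx y^{2/3}$ via fourth moments of $\zeta$ (Lemmas \ref{ramsan01} and \ref{ramsan02}). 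As it stands, your plan does not get below the trivial bound for $R$, which is the entire content of the theorem beyond the main terms.
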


{\bf Remarks:} (i) Theorem \ref{maintheorem02} is not only more general but also improves \eqref{chankumchev2b} when we choose $\b=1$.

(ii) In order to improve Proposition \ref{rr01} for $k\geq 3$ one may need to assume some strong results such as the moment hypothesis of the Riemann zeta-function. For  $k\geq 3$ improving Proposition \ref{rr01} unconditionally is still an open question.

Next we consider a generalization of the divisor function, defined by 
\begin{align}
\sigma_{z,\beta}(n) := \sum\limits_{{d^\beta }|n} {{d^{\beta z}}}.\label{sigbz} 
\end{align}
Crum \cite{CrumDirichletseries} seems to be first author who coined the notation \eqref{sigbz}. Understanding the asymptotic behavior of sums like
\begin{align}
\sum_{n\leq x}\s_{z_1,\b}(n)\quad\text{and}\quad\sum_{n\leq x}\s_{z_1,\b}(n)\s_{z_2,\b}(n) \nonumber
\end{align}
for $\Re(z_i)\leq 0$, $i=1,2$ is naturally needed in the proofs of Theorems \ref{maintheorem} and \ref{maintheorem02}. However, these sums are important objects in their own right. Clearly these sums are generalizations of 
\begin{align}
\sum_{n\leq x}d(n)\quad\text{and}\quad\sum_{n\leq x}d^2(n), \nonumber
\end{align}
respectively. 

%
The evaluation of the summation of the divisor function
\begin{align}
D(x) := \sum_{n \le x}d(n), \nonumber
\end{align}
has been studied extensively in the literature. In particular, it can be shown that
\begin{align}
D(x) = x \log x + (2 \gamma -1)x + \Delta(x), \nonumber
\end{align}
and the specific determination of the error term $\Delta(x)$ is called the Dirichlet divisor problem (see \cite[p. 68]{MontgomeryVaughanBook}). In 1849, Dirichlet \cite{Dirichler1849} proved that $\Delta(x)$ could be taken to be $O(x^{1/2})$. Further progress came in 1903 by Vorono\"{i} \cite{Voronoiasym}, who showed that $\Delta(x) \ll x^{1/3}\log x$, and then by van der Corput who proved in 1922 that $\Delta(x) \ll x^{33/100+\varepsilon}$, \cite{vanderCorput}. The exponent has been reduced over the years (see \cite[p. 69]{MontgomeryVaughanBook} for further details). The current record stands at $\Delta(x) \ll x^{131/416 + \varepsilon}$ and it is due to Huxley  \cite{huxley2}.

On the other hand, in \cite{Ramanujan16}, Ramanujan states without proof that
\begin{align} \label{ramanujanunproved}
d^2(1) + d^2(2) + \cdots + d^2(n) = A n (\log n)^3 + Bn (\log n)^2 + C n \log n + Dn +O(n^{3/5+\varepsilon}).
\end{align}
Moreover, and also without proof, Ramanujan claims that on the Riemann hypothesis, the error term in \eqref{ramanujanunproved} can be strengthened to $O(n^{1/2+\varepsilon})$. In 1922, Wilson \cite{Wilson} proved that indeed one can take the error term to be $O(n^{1/2+\varepsilon})$ unconditionally. As can be seen from \cite{MontgomeryVaughan1981, Ramachandra1999}, it is highly probable that that the error term is $O(n^{1/2})$.
Suppose that $P_3(t)$ denotes a polynomial in $t$ of degree 3. Let us set the notation 
\begin{align}
E(x) := \sum_{n \le x}d^2(n) - x P_3(\log x). \nonumber
\end{align}
Ramachandra and Sankaranarayanan \cite{RamachandraSankaranarayanan} showed that
\begin{align}
E(x) = O(x^{1/2} (\log x)^5 (\log \log x)) \nonumber
\end{align}
unconditionally.
In 1962, Chandrasekharan and Narasimhan \cite{ChandrasekharanNarasimhan} proved that 
\begin{align}
E(x) = \Omega_{\pm}(x^{1/4}). \nonumber 
\end{align}


For a given arithmetic function $f(n)$ we define 
\begin{align}
\sideset{}{'}\sum_{n\leq x}f(n)=\sum\limits_{n \leqslant x} f(n) - \frac{1}{2}f(x), \nonumber  
\end{align}
when $x$ a is positive integer. We have following asymptotic results.
\begin{theorem} \label{theormsigma}
Let $\operatorname{Re}(z)\leq 0$. Then
\begin{align}
\sideset{}{'}\sum\limits_{n \leqslant x} {\sigma_{z,\beta}(n)} = D_{z,\b}(x)+\Delta_{z,\beta}(x), \nonumber
\end{align}
where 
\begin{align}
\Delta_{z,\beta}(x)\ll x^{\frac{1}{3}}\log^2 x \nonumber 
\end{align}
uniformly for $\beta\geq 1$ and $ D_{z,\b}(x) $ is given by following.

(i) If $\b=1,2$ and $-\tfrac{2}{3\b^2}<\Re(z)\leq 0$, then 
\begin{align}
D_{z,\b}(x)=\zeta (\beta (1 - z))x + \frac{1}{{1 + \beta z}}\zeta \left( {z + \frac{1}{\beta }} \right){x^{z + \tfrac{1}{\beta }}} . \nonumber
\end{align}

(ii) If $\b\geq 3$ and $-1<\Re(z)\leq 0$, then 
\begin{align}
D_{z,\b}(x)=\zeta (\beta (1 - z))x . \nonumber
\end{align}
\end{theorem}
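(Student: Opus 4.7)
The starting point is the Dirichlet hyperbola identity, which after writing $n = d^\beta m$ in the definition of $\sigma_{z,\beta}(n)$ gives
\begin{align}
\sum_{n \le x}\sigma_{z,\beta}(n) = \sum_{d^\beta m \le x} d^{\beta z} = \sum_{d \le x^{1/\beta}} d^{\beta z}\left\lfloor \frac{x}{d^\beta}\right\rfloor. \nonumber
\end{align}
Splitting $\lfloor x/d^\beta \rfloor = x/d^\beta - \{x/d^\beta\}$ decomposes the right-hand side into a smooth piece and an oscillatory piece. The smooth piece $x\sum_{d \le x^{1/\beta}} d^{\beta(z-1)}$ is handled by partial summation, producing the main term $\zeta(\beta(1-z))\,x$ together with a secondary term of order $x^{z+1/\beta}$ whose coefficient, once combined with the analogous contribution from $-\tfrac{1}{2}\sum_{d \le x^{1/\beta}} d^{\beta z}$ that arises from the primed-sum convention, assembles into $\tfrac{\zeta(z + 1/\beta)}{\beta z + 1}\,x^{z + 1/\beta}$. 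This explains the shape of $D_{z,\beta}(x)$ in part (i) and matches, via residues, the poles of the Dirichlet series $\zeta(s)\zeta(\beta(s-z))$ at $s=1$ and $s=z+1/\beta$.

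For $\beta = 1, 2$ in case (i), the error from the oscillatory piece $\sum_{d \le x^{1/\beta}} d^{\beta z}\{x/d^\beta\}$ is then controlled by a Vorono\"i-type analysis: after expanding $\psi(y) = \{y\} - \tfrac{1}{2}$ in its Fourier series, one applies the Vorono\"i summation formula for $\sigma_{z,\beta}(n)$ (classical for $\beta = 1$, and for $\beta = 2$ obtained from the functional equation of $\zeta(s)\zeta(2(s-z))$) to convert the sum into a Bessel-function series. Truncating this series at length $\asymp x^{2/3}$ and invoking $J_\nu(y) \sim \sqrt{2/(\pi y)}\cos(y - \nu\pi/2 - \pi/4)$, together with a van der Corput B-process on the residual oscillatory sum, should produce $\Delta_{z,\beta}(x) \ll x^{1/3}\log^2 x$, uniformly for $\Re(z) \in (-2/(3\beta^2),\,0]$.

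For $\beta \ge 3$ in case (ii), no Vorono\"i is needed. The summation range $d \le x^{1/\beta}$ already has length at most $x^{1/3}$, so the oscillatory piece is trivially $\ll x^{1/\beta} \le x^{1/3}$. Moreover, the pole of $\zeta(s)\zeta(\beta(s-z))$ at $s = z + 1/\beta$ has real part $\Re(z) + 1/\beta \le 1/\beta \le 1/3$, so its residue (of size $\ll x^{1/\beta}$) is likewise absorbed into the error term, leaving only the single main term $\zeta(\beta(1-z))\,x$ claimed in part (ii).

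The principal obstacle is the $\beta = 1, 2$ case uniformly in $\Re(z) \le 0$. Classical Vorono\"i for $\Delta(x)$ produces $\ll x^{1/3}\log x$, and the challenge is to show that the extra weight $d^{\beta z}$ together with the parameter $\beta \in \{1,2\}$ degrades this bound by at most one additional power of $\log x$. The delicate sub-case occurs on the boundary $\Re(z) \to -2/(3\beta^2)$, where the secondary pole $s = z + 1/\beta$ approaches the contour $\Re(s) = 1/3$ that underlies the Vorono\"i truncation; here one must track the implicit constants carefully so that the secondary main term is captured exactly and no hidden term of size $x^{1/3}\log^3 x$ slips into the remainder.
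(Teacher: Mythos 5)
Your elementary route (hyperbola identity plus $\lfloor t\rfloor = t-\psi(t)-\tfrac12$ plus Vorono\"{\i}) is genuinely different from the paper's, which runs Perron's formula (Lemma \ref{montvon}) with $T=x^{2/3}$, bounds the truncation error via Shiu's theorem, shifts the contour to $\Re(s)=-a+1/\log x$, controls the left edge by the functional equation and a stationary-phase estimate (Lemma \ref{expint}), and reads off $D_{z,\beta}(x)$ as the residues at $s=1$ and $s=z+1/\beta$. However, your version has a concrete gap in case (i): the secondary main term does \emph{not} assemble from the smooth piece and the $-\tfrac12\sum_{d\le x^{1/\beta}}d^{\beta z}$ piece alone. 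Those two contribute the coefficient
\begin{align}
\frac{1}{1+\beta z-\beta}-\frac{1}{2(1+\beta z)} \nonumber
\end{align}
of $x^{z+1/\beta}$, which is a rational function of $z$ and cannot equal the target $\zeta(z+1/\beta)/(1+\beta z)$; the value $\zeta(z+1/\beta)$ lives at a point with real part $\le 1/\beta\le 1$ and can only emerge from the $\psi$-correction, i.e.\ from the analytic continuation $\zeta(w)=\sum_{d\le X}d^{-w}-\tfrac{X^{1-w}}{1-w}-\tfrac12 X^{-w}+w\int_X^\infty \psi(t)t^{-w-1}\,dt$.

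Equivalently: the oscillatory piece $\sum_{d\le x^{1/\beta}}d^{\beta z}\psi(x/d^{\beta})$ over the \emph{full} range is not an error term of size $x^{1/3}\log^2x$ as you claim. Already the block $x/2<d^{\beta}\le x$, where $\psi(x/d^{\beta})=x/d^{\beta}-\tfrac32$, contributes a quantity of exact order $x^{\Re(z)+1/\beta}$, and in your stated range $\Re(z)>-2/(3\beta^2)$ this exceeds $x^{1/3}$ (it is nearly $x$ when $\beta=1$ and $\Re(z)$ is near $0$). This is the same phenomenon as the classical $\sum_{d\le x}\psi(x/d)=(\tfrac12-\gamma)x+O(\sqrt{x})$. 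So no Vorono\"{\i} or van der Corput estimate can bound your oscillatory piece by $x^{1/3}\log^2x$; you must first extract its main term (e.g.\ by keeping the zero-frequency term in the Vorono\"{\i} expansion, or by truncating the $d$-sum at $x^{1/(2\beta)}$ and treating the tail separately), and only then does the coefficient bookkeeping close up. Your case (ii) for $\beta\ge 3$ is fine as stated, since there the whole $d$-range has length $\le x^{1/3}$ and the trivial bound suffices.
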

\begin{theorem} \label{theormsigma2}
Let $ \Re(z_1), \Re(z_2)\leq 0$, $\Re(z_1+z_2)>-1$ and $|\Re(z_1-z_2)|<1/b$. Then for $\b\geq 1$ one has
\begin{align}
\sideset{}{'}\sum\limits_{n \leqslant x} {\sigma _{z_1,\beta }(n)\sigma _{z_2,\beta}(n)} &= D_{z_1,z_2, \b}(x)+\Delta_{z_1,z_2,\b}(x), \nonumber
\end{align}
where the values $  D_{z_1,z_2, \b}(x)$ and $\Delta_{z_1,z_2,\b}(x) $ are given below.

(i) If  $\b=1,2$ and $-\tfrac{1}{2(2\b+1)}<\Re(z_1),\Re(z_2), \Re(z_1+z_2)\leq 0$ then for $z_1\neq 0, z_2\neq 0$, and $z_1\neq z_2$ one has
\begin{align}
 D_{z_1,z_2, \b}(x)=&\frac{{\zeta (\b(1 - z_1))\zeta (\b(1 - z_2)\zeta (\b(1 - z_1 - z_2))}}{{\zeta  (\b(2 - z_1 - z_2))}}x \nonumber \\
&\qquad+\frac{{\zeta (z_1 + 1/\b)\zeta (1 + \b z_1 -  \b z_2)\zeta (1 - \b z_2)}}{{(\b z_1 + 1)\zeta (2 + \b z_1 - \b z_2)}}x^{z_1+\frac{1}{\b}} \nonumber \\
&\qquad+\frac{{\zeta (z_2 + 1/\b)\zeta (1 +\b z_2 -\b z_1)\zeta (1 -\b z_1)}}{{(\b z_2 + 1)\zeta (2 +\b z_2  -\b  z_1)}}x^{z_2+\frac{1}{\b}} \nonumber \\
&\qquad+\frac{{\zeta (z_1 + z_2 + 1/\b)\zeta (\b z_2 + 1)\zeta (\b z_1 + 1)}}{{(\b z_1 +\b  z_2 + 1)\zeta (2 +\b  z_1 +\b z_2)}}x^{z_1+z_2+\frac{1}{\b}}, \nonumber
\end{align}
and
\begin{align} 
\Delta_{z_1,z_2,\b}(x)\ll x^{\frac{1}{3}+\frac{1}{6\b}+\frac{\operatorname{Re}(z_1)+\operatorname{Re}(z_2)}{6}}(\log^5x)\log\log x. \nonumber
\end{align}

(ii) If $\b\geq 3$ then for $z_1\neq 0, z_2\neq 0$, and $z_1\neq z_2$, then
\begin{align}
D(z_1,z_2,\b)(x)= \frac{{\zeta (\beta (1 - z_1))\zeta (\beta (1 - z_2))\zeta (\beta (1 - z_1 - z_2))}}{{\zeta (\beta (2 - z_1 - z_2))}}x, \nonumber
\end{align}
and 
\begin{align}
\Delta_{z_1,z_2,\b}(x)\ll \max\bigg(x^{\tfrac{1}{2\b}+\tfrac{\operatorname{Re}(z_1)+\operatorname{Re}(z_2)}{2}+\tfrac{\b|\operatorname{Re}(z_1)-\operatorname{Re}(z_2)|}{3}},x^{\tfrac{1}{3}+\tfrac{1}{6\b}+\tfrac{\operatorname{Re}(z_1)+\operatorname{Re}(z_2)}{6}}\bigg)(\log^5x)\log\log x. \nonumber
\end{align}
\end{theorem}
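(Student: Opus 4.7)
The plan is to apply a truncated Perron formula to the Dirichlet generating series for $\sigma_{z_1,\beta}(n)\sigma_{z_2,\beta}(n)$. First I would establish the Ramanujan-type identity
$$\sum_{n=1}^{\infty} \frac{\sigma_{z_1,\beta}(n)\sigma_{z_2,\beta}(n)}{n^s} = \frac{\zeta(s)\,\zeta(\beta(s-z_1))\,\zeta(\beta(s-z_2))\,\zeta(\beta(s-z_1-z_2))}{\zeta(\beta(2s-z_1-z_2))},$$
valid for $\Re(s)$ sufficiently large. The cleanest verification is to open both divisor sums, swap orders, and note that the joint condition $d_1^\beta\mid n$, $d_2^\beta\mid n$ is equivalent to $\mathrm{lcm}(d_1,d_2)^\beta\mid n$; writing $d_1=ga$, $d_2=gb$ with $(a,b)=1$ factors the resulting triple sum into a $g$-sum producing $\zeta(\beta(s-z_1-z_2))$, a coprime $(a,b)$-sum producing $\zeta(\beta(s-z_1))\zeta(\beta(s-z_2))/\zeta(\beta(2s-z_1-z_2))$, and an inner $n$-sum producing $\zeta(s)$, in direct analogy with the classical identity $\sum \sigma_a(n)\sigma_b(n)n^{-s} = \zeta(s)\zeta(s-a)\zeta(s-b)\zeta(s-a-b)/\zeta(2s-a-b)$ recovered at $\beta=1$.

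With the identity in hand I would then move the contour of the truncated Perron integral from a line $\Re(s)=c>1+\max(\Re(z_1),\Re(z_2),\Re(z_1+z_2),0)$ to the left. In case (i), with $\beta\in\{1,2\}$, the target line $\Re(s)=\tfrac{1}{3}+\tfrac{1}{6\beta}+\tfrac{\Re(z_1)+\Re(z_2)}{6}$ lies to the left of all four simple poles at $s=1$, $s=z_i+1/\beta$ ($i=1,2$), and $s=z_1+z_2+1/\beta$. The residue at $s=1$ is evaluated via $\mathrm{Res}_{s=1}\zeta(s)=1$, while the remaining three use $\mathrm{Res}_{s=z+1/\beta}\zeta(\beta(s-z))=1/\beta$; together with the factor $1/s$ evaluated at each pole this reproduces the denominators $\beta z_i+1$ and $\beta z_1+\beta z_2+1$ and recovers exactly the four main terms of $D_{z_1,z_2,\beta}(x)$. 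In case (ii), with $\beta\ge 3$, the same target line lies to the right of the three poles $s=z_i+1/\beta$ and $s=z_1+z_2+1/\beta$ since $1/\beta\le 1/3$, so only the $s=1$ residue survives; the uncollected residues are of size $x^{\Re(z_i)+1/\beta}$ and $x^{\Re(z_1+z_2)+1/\beta}$ and are absorbed into the error term. The hypotheses $\Re(z_1+z_2)>-1$ and $|\Re(z_1-z_2)|<1/\beta$ are precisely what keeps the poles distinct and the critical strips of the factors $\zeta(\beta(s-z_i))$ overlapping so that a single uniform contour works across the allowed range.

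The contour contribution on the shifted line is bounded using classical convexity estimates for $\zeta$ on vertical lines together with a Korobov--Vinogradov style lower bound $|\zeta(\beta(2s-z_1-z_2))|^{-1}\ll (\log T)^{A}\log\log T$ valid just inside the critical strip; this is the source of the $(\log^{5}x)\log\log x$ factor and follows the template of Ramachandra and Sankaranarayanan's treatment of $\sum_{n\le x}d^2(n)$, which is exactly the specialisation $\beta=1$, $z_1=z_2=0$ of our estimate. The line $\Re(s)=\tfrac{1}{3}+\tfrac{1}{6\beta}+\tfrac{\Re(z_1)+\Re(z_2)}{6}$ is chosen to balance the product of four convexity bounds against the factor $x^s$, and the Perron truncation height $T$ is selected so that the truncation remainder matches the contour contribution after inserting the $\tfrac{1}{2}f(x)$ correction that defines $\sideset{}{'}\sum$.

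The main obstacle I anticipate is the simultaneous bookkeeping required to balance four zeta-factor estimates with shifted arguments, the lower bound for the denominator, the Perron truncation at height $T$, and the position of the contour, all uniformly in $z_1,z_2$ across the region $\Re(z_i)\le 0$, $|\Re(z_1-z_2)|<1/\beta$. A secondary subtlety is the asymmetric behaviour encoded by the $\beta|\Re(z_1)-\Re(z_2)|/3$ term in case (ii): when $z_1$ and $z_2$ are imbalanced, the factor $\zeta(\beta(s-z_j))$ that ends up in the left half of its critical strip grows faster than its mate, and an alternative contour placement becomes preferable; the two regimes are packaged together as the maximum of two competing bounds in $\Delta_{z_1,z_2,\beta}(x)$.
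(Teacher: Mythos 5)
Your overall architecture (Crum's identity, truncated Perron, contour shift, four residues) matches the paper's, and your residue computations are correct. But the key analytic step --- bounding the integral on the shifted line --- is misidentified, and as described it would fail. You propose to control the shifted-line integral by ``classical convexity estimates for $\zeta$ on vertical lines.'' Pointwise convexity bounds applied to a product of four zeta factors near the critical line give roughly $|{\cdot}|\ll t^{4\cdot 1/4}=t$, so $\int_1^T |{\cdot}|\,dt/t \ll T$, which with any admissible $T$ destroys the claimed $(\log^5 x)\log\log x$ error and even the power of $x$. The actual engine (and the source of the $\log^5 x\,\log\log x$ factor) is the mean-value bound $\int_0^T|\zeta(\tfrac12+it)|^4\,dt\ll T\log^4 T$, fed into H\"older's inequality to decouple the four factors $\zeta(s)$, $\zeta(\b(s-z_1))$, $\zeta(\b(s-z_2))$, $\zeta(\b(s-z_1-z_2))$ into four separate fourth-moment integrals (the paper's Lemmas \ref{ramsan01} and \ref{ramsan02}, following Ramachandra--Sankaranarayanan). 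You cite that paper as a template, but attributing its conclusion to convexity misses the mechanism entirely.

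Two related discrepancies. First, the contour is not placed at $\Re(s)=\tfrac13+\tfrac{1}{6\b}+\tfrac{\Re(z_1)+\Re(z_2)}{6}$; that exponent is the \emph{output} of the optimization, not the abscissa. The paper shifts to $\l=\tfrac12(\Re(z_1)+\Re(z_2)+1/\b)$, chosen precisely so that the denominator $\zeta(\b(2s-z_1-z_2))$ sits on the line $\Re=1$, where $1/\zeta(1+it)\ll\log t$ classically (no Korobov--Vinogradov input is needed); the exponent $\tfrac13+\tfrac{1}{6\b}+\tfrac{\Re(z_1)+\Re(z_2)}{6}$ then emerges as $x^{\l}T_0^{1/2-\l}$ with $T_0\asymp x^{2/3}$. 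Second, the term $\b|\Re(z_1)-\Re(z_2)|/3$ in case (ii) does not come from ``an alternative contour placement'': on the fixed line $\Re(s)=\l$ one of the factors $\zeta(\b(s-z_i))$ lands to the left of its critical line, and reflecting it by the functional equation produces a $\chi$-factor of size $T_0^{\b|\Re(z_1)-\Re(z_2)|/2}$, which becomes $x^{\b|\Re(z_1)-\Re(z_2)|/3}$ after inserting $T_0\asymp x^{2/3}$. You would also need to supply the Perron truncation estimate (which requires Shiu's theorem to bound $\sum(\sigma_{0,\b}(n))^2$ over short intervals) and the averaging argument selecting a good height $T_0\in[T/2,T]$ for the horizontal segments; neither is addressed in the proposal.
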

{\bf Remark:} For other values of $z_1$ and $z_2$, such as $z_i=0$ or $z_1=z_2$, one can compute explicitly the value of $D_{z_1, z_2, \b}(x)$. 
Since the other cases are not of interest in the present paper,
only values of $D_{z_1, z_2, \b}(x)$ for $\Re(z_i)\leq 0$, $i=1,2$ are provided. In the proof of Theorem \ref{theormsigma2} we will see how the other values can, in fact, be obtained. Theorems \ref{theormsigma} and \ref{theormsigma2} can be computed for $\Re(z_i)>0$, $i=1,2$ by similar arguments of the methods presented here. We also avoid these cases. 

%
\section{Preliminaries}
We start this section by recalling two important identities due to Cohen \cite{CohenExtension}. These identities generalize the first identity of \eqref{formulae2} and the second identity of \eqref{formulae3}.
\begin{lemma}\label{cohen01}
	Suppose that $\beta$ is a positive integer, then one has
	\begin{align}
	\sum\limits_{q = 1}^\infty  {\frac{{c_{q,\beta}(n)}}{{{q^{\beta s}}}}}  = \frac{{\sigma _{1 - s,\beta }(n)}}{{\zeta (\beta s)}} \nonumber
	\end{align}
	for $\Re(s)>1$.
\end{lemma}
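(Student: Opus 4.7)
The plan is to reduce the identity to Cohen's convolution representation of $c_{q,\beta}(n)$, which is the natural $\beta$-analog of the M\"obius-inversion formula appearing as the second identity in \eqref{formulae3}. Specifically, \cite{CohenExtension} establishes
\begin{align}
c_{q,\beta}(n) = \sum_{\substack{d\mid q \\ d^{\beta}\mid n}} d^{\beta}\,\mu(q/d), \nonumber
\end{align}
which specializes to $c_q(n)=\sum_{d\mid q,\, d\mid n}d\,\mu(q/d)$ when $\beta=1$. I would take this identity as the starting point of the proof.

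Substituting it into the left-hand side of the claimed formula, interchanging the order of summation, and writing $q=dm$ so that $d$ ranges over positive integers with $d^{\beta}\mid n$ and $m$ ranges freely over positive integers, the double sum factors cleanly as
\begin{align}
\sum_{q=1}^{\infty}\frac{c_{q,\beta}(n)}{q^{\beta s}} = \bigg(\sum_{d^{\beta}\mid n} d^{\beta(1-s)}\bigg)\bigg(\sum_{m=1}^{\infty}\frac{\mu(m)}{m^{\beta s}}\bigg). \nonumber
\end{align}
The first factor is exactly $\sigma_{1-s,\beta}(n)$ by the definition \eqref{sigbz}, and the second is $1/\zeta(\beta s)$, valid for $\Re(\beta s)>1$ and hence for all $\Re(s)>1$ since $\beta\ge 1$. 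This yields the claimed identity.

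The only point requiring genuine care is the legitimacy of the interchange of summation. The crude bound $|c_{q,\beta}(n)|\le q^{\beta}$ is not by itself sharp enough to guarantee absolute convergence near $\Re(s)=1$; instead I would use the convolution representation above to extract the $q$-uniform estimate $|c_{q,\beta}(n)|\le \sigma_{1,\beta}(n)$, which depends only on $n$. Absolute convergence of $\sum_{q}c_{q,\beta}(n)/q^{\beta s}$ then holds throughout $\Re(\beta s)>1$, which certainly includes $\Re(s)>1$, and this justifies the rearrangement. I do not anticipate any substantive obstacle beyond double-checking the precise form of Cohen's inversion identity so that the exponents in $\sigma_{1-s,\beta}(n)$ line up as stated.
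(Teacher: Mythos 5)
Your proof is correct. The paper itself gives no proof of this lemma --- it is simply recalled from Cohen's paper \cite{CohenExtension} --- and your derivation, substituting the convolution identity of Lemma \ref{cohen02} into the Dirichlet series, splitting $q=dm$, and factoring the result as $\sigma_{1-s,\beta}(n)\cdot\zeta(\beta s)^{-1}$, is the standard argument; your observation that the uniform bound $|c_{q,\beta}(n)|\le\sigma_{1,\beta}(n)$ (rather than the crude $|c_{q,\beta}(n)|\le q^{\beta}$) is what justifies absolute convergence and the interchange of summation throughout $\Re(s)>1$ is exactly the right point of care.
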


\begin{lemma}\label{cohen02}
The generalization of the Ramanujan sum may be written as
\begin{align} 
c_{q,\beta}(n) = \sum_{\substack{d|q \\ {d^\beta }|n}} {{d^\beta }\mu \left( {\frac{q}{d}} \right)},\label{cohenmoebius}
\end{align}
where $\mu(n)$ is the M\"{o}bius function.
\end{lemma}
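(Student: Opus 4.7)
The plan is to prove Lemma \ref{cohen02} by mimicking the classical derivation of the identity $c_q(n)=\sum_{d\mid(n,q)}d\mu(q/d)$, but with the right Möbius inversion adapted to the $\beta$-th power coprimality condition.

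First I would replace the constraint $(h,q^\beta)_\beta=1$ in \eqref{cohendef} by a sum of Möbius values. The condition $(h,q^\beta)_\beta=1$ says that no $d^\beta>1$ with $d\mid q$ divides $h$. Let $S=\{d:d\mid q,\ d^\beta\mid h\}$. This set is closed under taking lcm of its elements, hence $S$ is precisely the set of divisors of $e:=\max S$, so by the standard identity $\sum_{d\mid e}\mu(d)=[e=1]$ I get
\begin{align}
[(h,q^\beta)_\beta=1]=\sum_{\substack{d\mid q\\ d^\beta\mid h}}\mu(d). \nonumber
\end{align}
Substituting this into the definition \eqref{cohendef} and swapping the order of summation yields
\begin{align}
c_{q,\beta}(n)=\sum_{d\mid q}\mu(d)\sum_{\substack{0\le h<q^\beta\\ d^\beta\mid h}}e^{2\pi i n h/q^\beta}. \nonumber
\end{align}

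Next I would evaluate the inner sum by writing $h=d^\beta k$ with $0\le k<(q/d)^\beta$, which turns the exponential into $e^{2\pi i n k/(q/d)^\beta}$. This is a standard complete geometric sum in $k$, equal to $(q/d)^\beta$ if $(q/d)^\beta\mid n$ and $0$ otherwise. Therefore
\begin{align}
c_{q,\beta}(n)=\sum_{\substack{d\mid q\\ (q/d)^\beta\mid n}}\mu(d)\left(\frac{q}{d}\right)^{\!\beta}. \nonumber
\end{align}

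Finally, performing the change of variables $d\mapsto q/d$ (which preserves the set of divisors of $q$ and swaps $\mu(d)$ with $\mu(q/d)$ while turning the condition $(q/d)^\beta\mid n$ into $d^\beta\mid n$) gives exactly \eqref{cohenmoebius}.

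I do not anticipate a genuine obstacle here; the only point requiring a small amount of care is the Möbius-inversion step, because the indicator function is over $\beta$-th-power divisors rather than ordinary divisors. The key observation that makes it routine is that, for fixed $h$ and $q$, the set $\{d\mid q:d^\beta\mid h\}$ is closed under lcm and hence is the divisor set of its maximum; once this is noted, the rest is the classical argument specialized to the level $q^\beta$.
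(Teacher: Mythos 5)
Your proof is correct. Note that the paper itself offers no proof of this lemma: it is simply recalled from Cohen's original article \cite{CohenExtension}, so there is no in-paper argument to compare against. Your derivation is the natural one (and essentially the classical $\beta=1$ argument transported to level $q^\beta$): the only point that genuinely needs justification is the Möbius-inversion identity for the indicator of $(h,q^\beta)_\beta=1$, and you handle it correctly by observing that $\{d : d\mid q,\ d^\beta\mid h\}$ is divisor-closed and closed under least common multiples, hence equals the divisor set of its maximal element; the evaluation of the inner complete exponential sum and the substitution $d\mapsto q/d$ are routine and correctly executed.
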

In \cite{CrumDirichletseries}, Crum derived the Dirichlet series for $\sigma _{z,\beta }(n)$.
\begin{lemma}\label{crum01}
	Suppose that $\beta$ is a positive integer and that $z \in \mathbb{C}$. One has
	\begin{align}
	\sum\limits_{n = 1}^\infty  {\frac{{\sigma _{z,\beta }(n)}}{{{n^s}}}}  = \zeta (s)\zeta (\beta (s - z)). \nonumber
	\end{align}
	for $\Re(s) > \max(\Re(z) +\frac{1}{\beta}, 1)$. 
\end{lemma}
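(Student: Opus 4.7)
The plan is to verify this as a direct computation, starting from the definition of $\sigma_{z,\beta}(n)$ in \eqref{sigbz} and collapsing the resulting double sum into a product of two zeta values by a substitution. The manipulation is standard; the only real content is identifying where absolute convergence forces the stated range of $s$.

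First I would insert the definition into the Dirichlet series:
\begin{align}
\sum_{n=1}^{\infty} \frac{\sigma_{z,\beta}(n)}{n^s} = \sum_{n=1}^{\infty} \frac{1}{n^s} \sum_{d^{\beta} \mid n} d^{\beta z}. \nonumber
\end{align}
Provided everything is absolutely convergent, I would then swap the order of summation, reindexing so that $d$ is the outer variable and $n$ ranges over multiples of $d^{\beta}$:
\begin{align}
\sum_{n=1}^{\infty} \frac{\sigma_{z,\beta}(n)}{n^s} = \sum_{d=1}^{\infty} d^{\beta z} \sum_{\substack{n\geq 1 \\ d^{\beta}\mid n}} \frac{1}{n^s}. \nonumber
\end{align}

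Next I would write $n = d^{\beta} m$ with $m$ ranging over the positive integers, so that the inner sum becomes $d^{-\beta s}\zeta(s)$. Factoring this out gives
\begin{align}
\sum_{n=1}^{\infty} \frac{\sigma_{z,\beta}(n)}{n^s} = \zeta(s)\sum_{d=1}^{\infty} \frac{1}{d^{\beta(s-z)}} = \zeta(s)\zeta(\beta(s-z)), \nonumber
\end{align}
which is the desired identity.

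The only subtlety is convergence, and this is really where the hypothesis is used. Absolute convergence of the inner geometric-type sum requires $\Re(s)>1$, while absolute convergence of the outer sum over $d$ requires $\Re(\beta(s-z))>1$, i.e.\ $\Re(s)>\Re(z)+1/\beta$. Combining these gives precisely $\Re(s)>\max(\Re(z)+1/\beta,1)$, in which range Fubini justifies the interchange, and the identity holds. I expect no genuine obstacle here; the proof is essentially bookkeeping, and follows the same template as the classical identity $\sum_n \sigma_z(n)n^{-s}=\zeta(s)\zeta(s-z)$ recovered in the case $\beta=1$.
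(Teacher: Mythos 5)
Your proof is correct. The paper does not prove this lemma at all --- it simply cites Crum \cite{CrumDirichletseries} --- and your computation (insert the definition \eqref{sigbz}, interchange the sums, substitute $n=d^{\beta}m$ to factor out $\zeta(s)$, and read off the convergence conditions $\Re(s)>1$ and $\Re(\beta(s-z))>1$) is the standard argument one would supply, with the stated half-plane falling out exactly as you describe.
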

The Dirichlet series for $\sigma_{p,\beta}(n)\sigma_{q,\beta}(n)$ is given in \cite{CrumDirichletseries}. 
\begin{lemma}\label{crum02}
	One has
	\begin{align}
	\sum\limits_{n = 1}^\infty  {\frac{{\sigma _{z_1,\beta}(n)\sigma _{z_2,\beta }(n)}}{{{n^s}}}}  = \frac{{\zeta (s)\zeta (\beta (s - z_1))\zeta (\beta (s - z_1))\zeta (\beta (s - z_1 - z_2))}}{{\zeta (\beta (2s - z_1 - z_2))}} \nonumber
	\end{align}
	for $\Re (s) > \max ( 1,\Re (z_1) + 1/\beta  ,\Re (z_2) + 1/\beta ,\Re (z_1 + z_2) + 1/\beta )$. 
\end{lemma}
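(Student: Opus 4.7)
My plan is to prove the identity via an Euler product computation that reduces the generalized case to the classical Ramanujan identity
$$\sum_{n=1}^\infty\frac{\sigma_a(n)\sigma_b(n)}{n^w}=\frac{\zeta(w)\zeta(w-a)\zeta(w-b)\zeta(w-a-b)}{\zeta(2w-a-b)}.$$
First, I would observe that $\sigma_{z,\beta}(n)$ is multiplicative, since the defining condition $d^\beta\mid n$ is multiplicative in $n$; consequently the pointwise product $\sigma_{z_1,\beta}(n)\sigma_{z_2,\beta}(n)$ is multiplicative as well, and on the half-plane $\Re(s)>\max(1,\Re(z_j)+1/\beta,\Re(z_1+z_2)+1/\beta)$ the Dirichlet series converges absolutely and factors as an Euler product $\prod_p F_p(s)$. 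It therefore suffices to identify each local factor.

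To compute $F_p(s)$, note that the divisors $d$ of $p^k$ with $d^\beta\mid p^k$ are exactly $d=p^j$ for $0\leq j\leq\lfloor k/\beta\rfloor$, so
$$\sigma_{z,\beta}(p^k)=\sum_{j=0}^{\lfloor k/\beta\rfloor}p^{\beta z j}=\sigma_{\beta z}(p^{\lfloor k/\beta\rfloor}),$$
the ordinary divisor sum at $p^{\lfloor k/\beta\rfloor}$. Writing $k=q\beta+r$ with $0\leq r<\beta$ separates the local sum into a finite geometric sum in $r$ and an infinite sum in $q$:
$$F_p(s)=\Bigl(\sum_{r=0}^{\beta-1}p^{-rs}\Bigr)\sum_{q=0}^\infty\frac{\sigma_{\beta z_1}(p^q)\sigma_{\beta z_2}(p^q)}{p^{q\beta s}}=\frac{1-p^{-\beta s}}{1-p^{-s}}\,G_p(s),$$
where $G_p(s)$ is the Euler factor at $p$ of Ramanujan's classical Dirichlet series in the variable $w=\beta s$ with parameters $a=\beta z_1$, $b=\beta z_2$.

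Applying the classical identity yields
$$G_p(s)=\frac{1-p^{-\beta(2s-z_1-z_2)}}{(1-p^{-\beta s})(1-p^{-\beta(s-z_1)})(1-p^{-\beta(s-z_2)})(1-p^{-\beta(s-z_1-z_2)})},$$
after which the factor $(1-p^{-\beta s})$ cancels against the numerator coming from the geometric sum in $r$. The surviving expression for $F_p(s)$ is precisely the local factor of $\zeta(s)\zeta(\beta(s-z_1))\zeta(\beta(s-z_2))\zeta(\beta(s-z_1-z_2))/\zeta(\beta(2s-z_1-z_2))$, so multiplying over all primes completes the proof. There is no genuine obstacle here: the only care required is verifying that the stated region of convergence is exactly where the four zeta factors on the right all make sense and where the formal Euler-product manipulation is licit, together with citing (or briefly re-deriving by partial fractions in $p^{-\beta s}$) the classical Ramanujan Euler-factor identity used in the final step.
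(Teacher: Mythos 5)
Your argument is correct, but note that the paper itself offers no proof of this lemma: it is quoted directly from Crum's 1940 paper, so there is nothing internal to compare against, and your Euler-product derivation serves as a clean self-contained replacement. The key steps all check out: $\sigma_{z,\beta}$ is multiplicative, the divisors $d$ with $d^\beta\mid p^k$ are exactly $p^j$ with $j\le\lfloor k/\beta\rfloor$ so that $\sigma_{z,\beta}(p^k)=\sigma_{\beta z}(p^{\lfloor k/\beta\rfloor})$, the split $k=q\beta+r$ extracts the factor $(1-p^{-\beta s})/(1-p^{-s})$, and after substituting $w=\beta s$, $a=\beta z_1$, $b=\beta z_2$ into Ramanujan's classical local identity the factor $(1-p^{-\beta s})$ cancels, leaving precisely the local factor of $\zeta(s)\zeta(\beta(s-z_1))\zeta(\beta(s-z_2))\zeta(\beta(s-z_1-z_2))/\zeta(\beta(2s-z_1-z_2))$. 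Two remarks. First, your computation exposes a typo in the statement as printed: $\zeta(\beta(s-z_1))$ appears twice where the second occurrence should be $\zeta(\beta(s-z_2))$; your corrected form is the one the paper actually uses later, in the function $f(z_1,z_2,s;\beta)$ of the proof of Theorem \ref{theormsigma2}. Second, the one point you flag but do not carry out --- that the Euler-product manipulation is licit in exactly the stated half-plane --- is handled by bounding the local sums $\sum_{k\ge1}\lvert\sigma_{z_1,\beta}(p^k)\sigma_{z_2,\beta}(p^k)\rvert p^{-k\sigma}$: the dominant contributions come from $k=1$ (forcing $\sigma>1$) and $k=\beta$, where $\sigma_{z,\beta}(p^\beta)=1+p^{\beta z}$ (forcing $\sigma>\Re(z_i)+1/\beta$ and $\sigma>\Re(z_1+z_2)+1/\beta$), which recovers precisely the stated region of absolute convergence; in the range $\Re(z_i)\le0$ relevant to the rest of the paper this reduces to the trivial bound $\lvert\sigma_{z_i,\beta}(n)\rvert\le d(n)$.
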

From \cite[Lemma 4.5, page 72]{TitchmarshHeath-Brown} we have
\begin{lemma}\label{expint}
Let $a,b$ and $M$ be real numbers and $r>0$. Let $F$ be a real valued function, twice differentiable, and $|F''(x)|\geq r$ in $[a,b]$. Let $G$ be a real valued function, $G/F'$ be monotonic and $|G(x)|\leq M$. Then 
\begin{align}
\left\lvert\int_{a}^{b}G(x)e^{iF(x)}dx\right\rvert\leq\frac{8M}{\sqrt{r}}. \nonumber
\end{align}

\end{lemma}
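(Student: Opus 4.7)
\textbf{Proof proposal for Lemma \ref{expint}.}
The plan is to exploit the fact that $|F''| \ge r$ forces $F'$ to be strictly monotonic, so that $F'$ has at most one zero in $[a,b]$. I would isolate a small neighborhood of that putative zero (where the phase is nearly stationary and only a trivial bound is available) and apply integration by parts on the remaining two subintervals (where $|F'|$ is bounded below).

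More precisely, let $c\in[a,b]$ be the unique zero of $F'$ if one exists; otherwise set $c=a$ or $c=b$ depending on the sign of $F'$. Fix a parameter $\delta>0$ to be chosen later, and split
\begin{align*}
\int_a^b G(x)e^{iF(x)}dx = I_1+I_2+I_3,
\end{align*}
where $I_2$ is the integral over $[c-\delta,c+\delta]\cap[a,b]$ and $I_1,I_3$ are the integrals over the two complementary intervals (some of which may be empty). The middle piece is trivially bounded by $|I_2|\le 2\delta M$. On each outer piece the mean value theorem applied to $F'$, together with $|F''|\ge r$, yields $|F'(x)|\ge r\delta$ throughout.

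On an outer subinterval $[\alpha,\beta]$ I would write
\begin{align*}
\int_\alpha^\beta G(x)e^{iF(x)}dx = \frac{1}{i}\int_\alpha^\beta \frac{G(x)}{F'(x)}\frac{d}{dx}e^{iF(x)}\,dx,
\end{align*}
and invoke the second mean value theorem for integrals: since $G/F'$ is monotonic by hypothesis, there exists $\xi\in[\alpha,\beta]$ with
\begin{align*}
\int_\alpha^\beta \frac{G(x)}{F'(x)}\frac{d}{dx}e^{iF(x)}dx = \frac{G(\alpha)}{F'(\alpha)}\int_\alpha^\xi \frac{d}{dx}e^{iF(x)}dx + \frac{G(\beta)}{F'(\beta)}\int_\xi^\beta \frac{d}{dx}e^{iF(x)}dx.
\end{align*}
Each of the remaining integrals is bounded by $2$, while $|G/F'|\le M/(r\delta)$, giving $|I_1|+|I_3|\le 8M/(r\delta)$. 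Combining with the trivial estimate for $I_2$ and optimizing in $\delta$ (roughly $\delta\asymp 1/\sqrt{r}$) yields the stated bound $8M/\sqrt{r}$, after mild bookkeeping of the numerical constants.

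The only delicate point is the application of the second mean value theorem: one needs $G/F'$ to be monotonic on each outer subinterval, which follows from the monotonicity assumption on $G/F'$ over the whole interval $[a,b]$. Once that is in hand, the remainder is routine and the constant $8$ emerges from a conservative balancing between the stationary neighborhood and the integration-by-parts pieces.
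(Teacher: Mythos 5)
The paper does not prove this lemma at all: it is quoted verbatim from Titchmarsh--Heath-Brown [Lemma 4.5, p.\ 72], so there is no internal proof to compare against. Your proposal reconstructs exactly the standard argument behind that citation (and the one used in the cited source): $|F''|\geq r$ forces $F'$ to be strictly monotone with at most one zero $c$, one bounds the integral over $[c-\delta,c+\delta]$ trivially by $2\delta M$, applies the first-derivative test on the outer pieces where $|F'|\geq r\delta$, and optimizes at $\delta=2/\sqrt{r}$ to get $4M/\sqrt{r}+4M/\sqrt{r}=8M/\sqrt{r}$. The structure is correct and is all the paper needs, since the lemma is only ever invoked in the form $\ll M/\sqrt{r}$.

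The one point you should not dismiss as ``mild bookkeeping'' is the constant. The second mean value theorem is a statement about real integrands, and you apply it directly to the complex-valued $\tfrac{d}{dx}e^{iF(x)}$. Splitting into $\tfrac{d}{dx}\sin F$ and $\tfrac{d}{dx}\cos F$ and using the two-term form of the second mean value theorem (which is what a merely monotonic, possibly sign-changing $G/F'$ entitles you to) gives each outer interval a bound of $8M/(r\delta)$ rather than the $4M/(r\delta)$ you claim, and the optimization then lands at $8\sqrt{2}\,M/\sqrt{r}$, not $8M/\sqrt{r}$ --- there is no slack in $2\delta M+8M/(r\delta)$ at $\delta=2/\sqrt{r}$. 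Recovering $4M/(r\delta)$ per piece requires the one-sided (Bonnet) form of the second mean value theorem, hence that $G/F'$ keep a constant sign on each outer piece; $F'$ does, but $G$ need not. This defect is inherited from the source (Titchmarsh's Lemma 4.3 carries the hypothesis $F'/G\geq m>0$ precisely to ensure this), so your proof is a faithful reconstruction; just be aware that as written it proves $\left\lvert\int_a^b G(x)e^{iF(x)}dx\right\rvert\ll M/\sqrt{r}$ with an absolute constant, and the specific constant $8$ requires either the extra sign hypothesis or a sharper handling of the complex exponential.
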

Let $\mathcal{M}$ be a class of non-negative arithmetic functions which are multiplicative and that satisfy: 
\begin{itemize}
\item[(i)]
there exists a positive constant $A$ such that if $p$ is a prime and $l\geq 1$ then
\begin{align}
f(p^l)\leq A^l; \nonumber
\end{align}
\item[(ii)]
for every $\epsilon>0$, there exists a positive constant $B(\epsilon)$ such that 
\begin{align}
f(n)\leq B(\epsilon)n^{\epsilon} \nonumber
\end{align}
for $n\geq 1$.
\end{itemize} 
In \cite{Shiu}, Shiu showed
\begin{lemma}\label{shiu}
Let $f\in \mathcal{M}$, $0<\alpha,\beta<1/2$ and let $a, k$ be integers. If $0<a<k$ and $(a,k)=1$, then as $x\to\infty$
\begin{align}
\sum_{\substack{x-y<n\leq x\\n\equiv a\pmod q}}f(n)\ll \frac{y}{\phi(q)\log x}\exp\bigg(\sum_{p\leq x,\  p\nmid q}\frac{f(p)}{p}\bigg), \nonumber
\end{align}
uniformly in $a, q$, and $y$ provided that $q\leq y^{1-\alpha}$, $x^{\beta}<y\leq x$. 
\end{lemma}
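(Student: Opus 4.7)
Since this is Shiu's well-known Brun--Titchmarsh-type theorem for multiplicative functions from \cite{Shiu}, the plan is simply to sketch his argument; the strategy combines a Selberg upper bound sieve with Euler product estimates for multiplicative functions in the class $\mathcal{M}$. I would first observe that $(a,q)=1$ forces $(n,q)=1$ for every summand, so the congruence restricts $n$ to a single primitive residue class modulo $q$, and the desired $\phi(q)^{-1}$ arises as the natural density of such a class.

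Next I would apply a Rankin-type decomposition: using the growth bounds defining $\mathcal{M}$, one finds a non-negative multiplicative majorant $g$ satisfying $f(n)\leq\sum_{d\mid n}g(d)$, with $g(p)\leq f(p)+O(1)$ on primes and $g$ decaying rapidly on higher prime powers (axiom (i) is what makes this possible without blowing up). Interchanging summation gives
\begin{align*}
\sum_{\substack{x-y<n\leq x\\n\equiv a\pmod q}}f(n)\leq \sum_{\substack{d\leq y\\(d,q)=1}}g(d)\,\#\{(x-y)/d<m\leq x/d:\ dm\equiv a\pmod q\},
\end{align*}
and since $(d,q)=1$, the inner cardinality equals $y/(dq)+O(1)$. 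Summing the leading $y/(dq)$ against $g(d)$ and applying Mertens' theorem to the Euler product $\prod_{p\leq x,\,p\nmid q}(1+g(p)/p+\cdots)$ should yield a bound of order
\begin{align*}
\frac{y}{\phi(q)\log x}\exp\bigg(\sum_{\substack{p\leq x\\p\nmid q}}\frac{f(p)}{p}\bigg),
\end{align*}
in which the factor $1/\log x$ comes from Selberg sieve weights applied to the small prime factors of $n$, and axiom (ii) is needed to ensure that the Euler product converges to an expression of this exponential form.

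The hard part will be absorbing the $O(1)$ error from the divisor count uniformly in $a$ and $q$: one has to truncate the sieve at a carefully chosen level $z\leq(y/q)^{1/2-\epsilon}$, and the hypotheses $q\leq y^{1-\alpha}$ together with $x^\beta<y\leq x$ are precisely what ensure that the total sieve error (of size essentially $z^2$ plus a contribution from the uncovered long primes) remains dominated by the main term. Balancing this truncation uniformly in $a$, $q$, $x$, $y$ is the technical core of \cite{Shiu}, to which we defer for the full bookkeeping.
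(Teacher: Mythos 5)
The paper does not prove this lemma at all: it is quoted verbatim from Shiu's article \cite{Shiu} (followed only by the remark of Nair and Tenenbaum about relaxing multiplicativity when $q=1$), so there is no in-paper argument to compare yours against. What can be assessed is whether your sketch is a faithful account of the known proof, and here there is a concrete problem with the step you actually display. A divisor-convolution majorant $f(n)\leq\sum_{d\mid n}g(d)$ with $g(p)\leq f(p)+O(1)$, followed by the flat count $y/(dq)+O(1)$ for the inner progression, gives at best $\frac{y}{q}\sum_{(d,q)=1}g(d)/d \ll \frac{y}{q}\exp\big(\sum_{p\leq x,\,p\nmid q}(f(p)+O(1))/p\big)$. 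The $O(1)$ on the primes costs a factor $\exp\big(O(\sum_{p\leq x}1/p)\big)\asymp(\log x)^{C}$, and nothing in this computation produces the crucial factor $1/\log x$; asserting afterwards that the $1/\log x$ ``comes from Selberg sieve weights'' does not attach to the decomposition you wrote down, because once you have passed to $\sum_d g(d)\cdot\#\{m\}$ there is no rough integer left to sieve. So the sketch, executed literally, falls short of the stated bound by several powers of $\log x$.

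Shiu's actual argument decomposes each $n$ multiplicatively rather than through a convolution majorant: one writes $n=a_nb_n$ with $a_n$ the largest divisor of $n$ supported on primes $\leq z$ (a threshold comparable to a small power of $y$), and splits into cases according to the size of $a_n$ and the least prime factor of $b_n$. In the main range one fixes $a=a_n$, notes $f(n)\leq f(a)f(b_n)$ with $f(b_n)$ controlled by axioms (i)--(ii), and applies an upper-bound sieve to count the integers $m\equiv a\bar{a}\pmod q$ in the short interval having no prime factor $\leq z$; this is the step that yields $\ll \frac{y}{a\,\phi(q)\log z}$ and hence the $1/\log x$. Summing $f(a)/a$ over $z$-smooth $a$ coprime to $q$ then produces the Euler product $\ll\exp\big(\sum_{p\leq x,\,p\nmid q}f(p)/p\big)$, and Rankin's trick together with axiom (ii) disposes of the ranges where $a_n$ is large or $b_n$ is nearly prime. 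Your remarks about the role of $q\leq y^{1-\alpha}$ and $x^{\beta}<y\leq x$ in keeping the sieve level admissible and the error terms subordinate are correct in spirit, but the core of the proof is the smooth--rough factorization of $n$ itself, not a majorant of $f$ by a divisor sum.
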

In \cite{NairTenenbaum}, Nair and Tenenbaum observed that if $q=1$ then one can obtain the same result when $f$ is non-negative, sub-multiplicative and satisfying (i) and (ii). 

We also recall the following well-known estimate. 
\begin{lemma}\label{merest}
Let $M$ be the Mertens constant. Then
\begin{align}
\sum_{p\leq x}\frac{1}{p}=\log\log x +M+O\bigg(\frac{1}{\log x}\bigg). \nonumber
\end{align}
\end{lemma}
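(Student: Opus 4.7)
The plan is to follow Mertens' classical three-step argument: reduce to a logarithmically weighted prime sum, then recover the unweighted sum via partial summation.

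First I would establish the auxiliary estimate
\begin{align}
\sum_{n \le x}\frac{\Lambda(n)}{n} = \log x + O(1). \nonumber
\end{align}
Starting from the convolution identity $\log n = \sum_{d \mid n}\Lambda(d)$, sum over $n \le x$ and swap the order of summation to obtain $\sum_{d \le x}\Lambda(d)\lfloor x/d\rfloor = \sum_{n \le x}\log n$. Stirling's formula gives $\sum_{n \le x}\log n = x\log x - x + O(\log x)$, and replacing $\lfloor x/d\rfloor$ by $x/d$ introduces an error controlled by Chebyshev's elementary bound $\psi(x) = O(x)$.

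Next I would remove the higher prime-power terms. Since $\sum_p \sum_{k \ge 2}(\log p)/p^k$ converges absolutely, the previous display yields $\sum_{p \le x}(\log p)/p = \log x + O(1)$, and a more careful bookkeeping of the Stirling expansion together with a sharper estimate of $\sum_{d \le x}\Lambda(d)\{x/d\}$ upgrades the error to the sharp form
\begin{align}
\sum_{p \le x}\frac{\log p}{p} = \log x - B + O\!\left(\frac{1}{\log x}\right) \nonumber
\end{align}
of Mertens' first theorem, for an explicit constant $B$. With $S(t) := \sum_{p \le t}(\log p)/p$, Abel summation then gives
\begin{align}
\sum_{p \le x}\frac{1}{p} = \frac{S(x)}{\log x} + \int_{2}^{x}\frac{S(t)}{t\log^{2} t}\, dt, \nonumber
\end{align}
and substituting the sharp form of $S$ causes the $B/\log x$ contributions from the boundary term and from $\int_{2}^{x}dt/(t\log^{2} t)$ to cancel, while the main integral $\int_{2}^{x}dt/(t\log t) = \log\log x - \log\log 2$ produces the leading term; the surviving constants collect into the Mertens constant $M$, with total error $O(1/\log x)$.

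The principal obstacle is the sharpening of Mertens' first theorem from error $O(1)$ to $O(1/\log x)$: Chebyshev's crude bound is insufficient, and one must either follow Mertens' original elementary refinement (using smoothed von Mangoldt sums such as $\sum_{n \le x}\Lambda(n)\log(x/n)/n$ and a bootstrap) or invoke the analytic behaviour of $-\zeta'(s)/\zeta(s)$ near $s = 1$. Once this is in hand, the remainder reduces to routine partial summation and elementary integration.
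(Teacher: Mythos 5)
First, note that the paper offers no proof of this lemma at all --- it is simply quoted as ``the following well-known estimate'' --- so there is no internal argument to compare yours against; the only question is whether your outline is sound.

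Your overall route (Mertens' first theorem followed by partial summation) is the standard one and does lead to the stated result, but the middle step as you describe it would fail. You claim that ``a more careful bookkeeping of the Stirling expansion together with a sharper estimate of $\sum_{d\le x}\Lambda(d)\{x/d\}$'' upgrades Mertens' first theorem to $\sum_{p\le x}(\log p)/p=\log x-B+O(1/\log x)$. That refinement is not a matter of bookkeeping: the mere existence of a limiting constant for $\sum_{n\le x}\Lambda(n)/n-\log x$ is equivalent to the prime number theorem, and an error term $O(1/\log x)$ there requires a quantitative form of it --- equivalently, estimating $\sum_{d\le x}\Lambda(d)\{x/d\}$ to precision $o(x)$, which is precisely the hard content of the PNT. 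Mertens' elementary argument yields only the bounded-error form $\sum_{p\le x}(\log p)/p=\log x+E(x)$ with $E(x)=O(1)$, not convergence of $E(x)$. Fortunately the sharp form is also unnecessary: in your own partial-summation identity, writing $S(t)=\log t+E(t)$ with merely $E=O(1)$, the boundary term is $1+E(x)/\log x=1+O(1/\log x)$, the main integral gives $\log\log x-\log\log 2$, and $\int_2^x E(t)\,dt/(t\log^2 t)$ converges as $x\to\infty$ with tail $O(1/\log x)$; the limit of that integral, together with the other constants, is exactly what assembles into $M$. So the ``principal obstacle'' you identify is a non-obstacle: delete the sharpening step and your proof closes with purely elementary input.
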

The following lemma can be easily adopted from \cite[Theorem 5.2]{MontgomeryVaughanBook}. For the sake of completeness we will give the sketch of the proof.
\begin{lemma}\label{montvon}
Let $0<\l_1<\l_2<\dots<\l_n\to\infty$ be any sequence of real numbers and let $ \{a_n\} $ be any sequence of complex numbers. Let the Dirichlet series $\alpha(s) := \sum_{n=1}^{\infty} a_n \l_n^{-s}$ be absolutely convergent for some $\Re(s)>\sigma_a$. If $\sigma_0 > \max(0,\sigma_a)$ and $x>0$, then
\begin{align}
\sideset{}{'}\sum_{\l_n \le x} a_n = \frac{1}{2\pi i}\int_{\sigma_0 -iT}^{\sigma_0 + iT} \alpha(s) \frac{x^s}{s}ds + R, \nonumber
\end{align}
where
\begin{align}
R \ll \sum_{\substack{x/2 < \l_n < 2x \\ n \ne x}} |a_n| \min \bigg( 1, \frac{x}{T|x-\l_n|} \bigg) + \frac{4^{\sigma_0}+x^{\sigma_0}}{T} \sum_{n=1}^{\infty} \frac{|a_n|}{\l_n^{\sigma_0}}. \nonumber
\end{align}
\end{lemma}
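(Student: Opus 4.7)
The plan is to establish this as the Dirichlet-series version of the classical truncated Perron integral, applied term-by-term. The first step is to record the standard identity
\begin{align}
\frac{1}{2\pi i}\int_{c-iT}^{c+iT}\frac{y^{s}}{s}\,ds = \delta(y)+E(y,T), \nonumber
\end{align}
valid for $c,T,y>0$, where $\delta(y)$ equals $1$, $1/2$, or $0$ according as $y>1$, $y=1$, or $y<1$, and the error satisfies $E(y,T)\ll y^{c}\min(1,(T|\log y|)^{-1})$ for $y\ne 1$ and $E(1,T)\ll c/T$. This is proved by closing the rectangular contour to the left for $y>1$ (picking up the residue at $s=0$) and to the right for $y<1$, and then estimating the three extra segments.

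Since $\sigma_{0}>\sigma_{a}$, the Dirichlet series $\alpha(s)=\sum_{n}a_{n}\lambda_{n}^{-s}$ is absolutely convergent on the line $\Re(s)=\sigma_{0}$, which justifies interchanging sum and integral:
\begin{align}
\frac{1}{2\pi i}\int_{\sigma_{0}-iT}^{\sigma_{0}+iT}\alpha(s)\frac{x^{s}}{s}\,ds = \sum_{n=1}^{\infty}a_{n}\cdot\frac{1}{2\pi i}\int_{\sigma_{0}-iT}^{\sigma_{0}+iT}\frac{(x/\lambda_{n})^{s}}{s}\,ds. \nonumber
\end{align}
Applying the identity above with $y=x/\lambda_{n}$ and $c=\sigma_{0}$, the $\delta$-contribution sums to $\sum_{\lambda_{n}<x}a_{n}+\tfrac{1}{2}\sum_{\lambda_{n}=x}a_{n}=\sideset{}{'}\sum_{\lambda_{n}\le x}a_{n}$, while the remainder is $R=\sum_{n\ge 1}a_{n}E(x/\lambda_{n},T)$.

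The final step is to bound $R$ by splitting the index set according to the position of $\lambda_{n}$ relative to $x$. In the near range $x/2<\lambda_{n}<2x$ with $\lambda_{n}\ne x$, the factor $(x/\lambda_{n})^{\sigma_{0}}$ is an absolute constant and $|\log(x/\lambda_{n})|\gg |x-\lambda_{n}|/x$, which converts $E$ into $O(\min(1,x/(T|x-\lambda_{n}|)))$ and produces the first piece of the stated bound; the single possible index $\lambda_{n}=x$ contributes $|a_{x}|\cdot O(\sigma_{0}/T)$, absorbed into the second piece. In the far range $\lambda_{n}\le x/2$ or $\lambda_{n}\ge 2x$ one has $|\log(x/\lambda_{n})|\ge\log 2$, so $E\ll(x/\lambda_{n})^{\sigma_{0}}/T$, and a careful treatment of the two subranges yields $O(((4^{\sigma_{0}}+x^{\sigma_{0}})/T)\sum|a_{n}|\lambda_{n}^{-\sigma_{0}})$, the constant $4^{\sigma_{0}}$ arising from the edge behaviour near $\lambda_{n}\approx 1$ and the constant $x^{\sigma_{0}}$ from the tail $\lambda_{n}\ge 2x$; absolute convergence of $\sum|a_{n}|\lambda_{n}^{-\sigma_{0}}$ is guaranteed by $\sigma_{0}>\sigma_{a}$. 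The principal point requiring care is this splitting and the uniform tracking of the constants into exactly the shape claimed; the rest of the argument is mechanical.
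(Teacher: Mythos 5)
Your proposal is correct and follows essentially the same route as the paper: both reduce the lemma to the truncated Perron kernel estimate for $\frac{1}{2\pi i}\int_{\sigma_0-iT}^{\sigma_0+iT}(x/\lambda_n)^s s^{-1}\,ds$ (the paper quotes it in the four-case $\operatorname{si}$-function form of Montgomery--Vaughan, Eq.~(5.9), while you use the equivalent $\delta(y)+E(y,T)$ formulation), interchange sum and integral by absolute convergence on $\Re(s)=\sigma_0$, and pass from $|\log(x/\lambda_n)|$ to $|x-\lambda_n|/x$ in the range $x/2<\lambda_n<2x$. The only cosmetic difference is your bookkeeping of the constants $4^{\sigma_0}$ and $x^{\sigma_0}$, which matches the stated bound up to a $\sigma_0$-dependent implied constant, exactly as in the paper.
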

\begin{proof}
Let
\begin{align}
\operatorname{si}(x) :=-\int_{x}^{\infty}\frac{\sin u}{u}du. \nonumber 
\end{align}
Integrating by parts one obtains 
\begin{align}
\operatorname{si}(x)\ll \min(1,1/x)\label{siest}
\end{align}
for $x>0$.
The proof of the lemma follows from the following identity \cite[p.~139, Eq.~(5.9)]{MontgomeryVaughanBook}
\begin{align}
\frac{1}{2\pi i}\int_{\sigma_0 -iT}^{\sigma_0 + iT} \frac{x^s}{s}ds\ll \fourpartdef{1+O(\frac{x^{\s_0}}{T}),}{x\geq 2}{1+\frac{1}{\pi}\operatorname{si}(T\log x)+O(\frac{2^{\s_0}}{T}),}{1\leq x\leq 2}{-\frac{1}{\pi}\operatorname{si}(-T\log x)+O(\frac{2^{\s_0}}{T}),}{\frac{1}{2}\leq x\leq 1}{O(\frac{x^{\s_0}}{T}),}{x\leq \frac{1}{2}}\label{fourpartiden}
\end{align}
for $\s_0>0$. Now 
\begin{align}
\frac{1}{2\pi i}\int_{\sigma_0 -iT}^{\sigma_0 + iT} \alpha(s) \frac{x^s}{s}ds=\sum_{n=1}^{\infty}a_n\frac{1}{2\pi i}\int_{\sigma_0 -iT}^{\sigma_0 + iT} \frac{(x/\l_n)^s}{s}ds.\label{id1}
\end{align}
Applying \eqref{siest}, \eqref{fourpartiden} and the fact that
\begin{align}
|\log(1+\d)|\asymp|\d| \nonumber
\end{align}
in \eqref{id1} we obtain the desired result.
\end{proof}
Note that for any fixed real number $t'$ and $\s\geq 1/2$ (see \cite[Chap.~VII]{TitchmarshHeath-Brown})
\begin{align}
\int_{T/2}^{T}|\z^4(\s+i(t+t'))|dt \ll T\log^4 T. \nonumber
\end{align}
Therefore arguing in a similar fashion as in \cite[Lemmas 3.3 and 3.4]{RamachandraSankaranarayanan} we have following two lemmas.
\begin{lemma}\label{ramsan01}
Let $\sigma\geq 1/2$ and $T>0$. Then for any fixed real numbers $t', t''$, and $\s'$ we have 
\begin{align}
\int_{|t| \le T} \bigg| \frac{\zeta^4(\s+i(t+t'))}{\zeta(1+2it)} \bigg| \frac{dt}{|\s'+i(t+t'')|}  \ll (\log T)^5 (\log \log T). \nonumber
\end{align}
\end{lemma}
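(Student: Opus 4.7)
The plan is to follow the dyadic-decomposition approach of Ramachandra and Sankaranarayanan in their Lemmas 3.3--3.4, combining the fourth power moment of $\zeta$ recalled just before the statement with a mean-value bound for $1/\zeta(1+2it)$. First I would split the range of integration into a compact piece $|t|\le t_0$ and a tail piece $t_0 < |t|\le T$, for some absolute constant $t_0$. On the compact piece, $\zeta^4(\sigma+i(t+t'))$ is bounded, the weight $|\sigma'+i(t+t'')|^{-1}$ is controlled by a constant depending only on the fixed real parameters, and $1/\zeta(1+2it)$ vanishes linearly at $t=0$ because $\zeta$ has a simple pole at $1$; so this portion contributes only $O(1)$.

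For the tail I would carry out a dyadic decomposition into intervals $I_j=\{t:T_j<|t|\le 2T_j\}$ with $T_j=t_0 2^j$ and $j$ running up to $O(\log T)$. On each $I_j$ the weight satisfies $|\sigma'+i(t+t'')|^{-1}\ll T_j^{-1}$ uniformly in the fixed parameters, so the contribution of $I_j$ is bounded by
\[
\frac{1}{T_j}\int_{I_j}\frac{|\zeta^4(\sigma+i(t+t'))|}{|\zeta(1+2it)|}\,dt.
\]
The key mean-value estimate, which constitutes the main obstacle of the proof, is
\[
\int_{I_j}\frac{|\zeta^4(\sigma+i(t+t'))|}{|\zeta(1+2it)|}\,dt \ll T_j(\log T_j)^4\log\log T_j.
\]
I would obtain this by inserting the fourth moment $\int_{T_j}^{2T_j}|\zeta^4(\sigma+i(t+t'))|\,dt\ll T_j(\log T_j)^4$ (valid uniformly in the fixed shift $t'$, since translation by $t'$ changes the range of integration by only $O(1)$) and then handling $|\zeta(1+2it)|^{-1}$ in an on-average manner. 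The naive pointwise bound $|\zeta(1+2it)|^{-1}\ll \log T_j$ coming from the classical zero-free region would cost an extra full power of $\log T$; the improvement to $\log\log T$ proceeds by working directly with the Dirichlet series identity $\zeta^4(s)/\zeta(2s)=\sum_{n=1}^{\infty}d^2(n)n^{-s}$, together with the classical asymptotic $\sum_{n\le x}d^2(n)\sim xP_3(\log x)$, in the manner carried out by Ramachandra and Sankaranarayanan.

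Finally, summing the dyadic contributions yields
\[
\sum_{j\le \log_2 T}\frac{1}{T_j}\cdot T_j(\log T_j)^4\log\log T_j \ll (\log T)^5\log\log T,
\]
which is the claimed bound. The dyadic bookkeeping and the treatment of the weight $|\sigma'+i(t+t'')|^{-1}$ are routine once the sharper mean-value estimate is in place; the heart of the matter is the replacement of one full $\log T$ by $\log\log T$ via the average behaviour of $1/\zeta(1+2it)$ against $|\zeta^4|$.
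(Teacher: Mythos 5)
Your proposal is correct and follows essentially the same route as the paper, which does not prove this lemma directly but states that it follows "by arguing in a similar fashion as in Lemmas 3.3 and 3.4 of Ramachandra--Sankaranarayanan," with the fourth-moment bound $\int_{T/2}^{T}|\zeta^4(\sigma+i(t+t'))|\,dt\ll T\log^4T$ as the stated input. Your dyadic decomposition, the $O(1)$ treatment of the range near $t=0$ (using that $1/\zeta(1+2it)$ vanishes there), and the per-block estimate $\int_{I_j}|\zeta^4/\zeta(1+2it)|\,dt\ll T_j(\log T_j)^4\log\log T_j$ — with the $\log\log$ saving deferred to the Ramachandra--Sankaranarayanan argument via $\zeta^4(s)/\zeta(2s)=\sum d^2(n)n^{-s}$ — is exactly the intended reconstruction of that citation.
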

\begin{lemma}\label{ramsan02}
Let $z$ be a complex number and $\Re(z)>0$. For $\sigma \ge 1/2$ we have
\begin{align}
\int_{1/2}^1 \int_{T/2}^T \bigg| \frac{\zeta^4(s+z)}{\zeta(2s)} \bigg| \bigg| \frac{x^s}{s} \bigg| d\sigma dt \ll (\log T)^4 (\log \log T)(x-x^{1/2})(\log x)^{-1}. \nonumber
\end{align}
\end{lemma}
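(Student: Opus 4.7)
The plan is to first interchange the order of integration via Fubini's theorem and extract the $\sigma$-dependent factor $x^\sigma$. Since $|x^s| = x^\sigma$, this yields
\begin{align}
\int_{1/2}^1 \int_{T/2}^T \bigg| \frac{\zeta^4(s+z)}{\zeta(2s)} \bigg| \bigg| \frac{x^s}{s} \bigg| d\sigma\, dt = \int_{1/2}^1 x^\sigma I(\sigma)\, d\sigma, \nonumber
\end{align}
where $I(\sigma) := \int_{T/2}^T |\zeta(s+z)|^4/|\zeta(2s)| \cdot dt/|s|$. Since $\int_{1/2}^1 x^\sigma\, d\sigma = (x-x^{1/2})/\log x$, the proof reduces to establishing the uniform-in-$\sigma$ bound $I(\sigma) \ll (\log T)^4 \log \log T$ for all $\sigma \in [1/2, 1]$.

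For $I(\sigma)$, I would use that $|s| \asymp T$ on $[T/2, T]$ and apply Cauchy--Schwarz in the form
\begin{align}
I(\sigma) \ll \frac{1}{T} \bigg( \int_{T/2}^T |\zeta(s+z)|^4\, dt \bigg)^{1/2} \bigg( \int_{T/2}^T \frac{|\zeta(s+z)|^4}{|\zeta(2s)|^2}\, dt \bigg)^{1/2}. \nonumber
\end{align}
The first factor is $\ll (T(\log T)^4)^{1/2}$ by the fourth moment estimate recalled immediately before Lemma \ref{ramsan01}, applicable because the hypothesis $\Re(z) > 0$ gives $\Re(s+z) = \sigma + \Re(z) > 1/2$. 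The second factor is bounded by arguments analogous to those of Ramachandra and Sankaranarayanan \cite{RamachandraSankaranarayanan}: one proves $\int_{T/2}^T |\zeta(s+z)|^4/|\zeta(2s)|^2\, dt \ll T(\log T)^4 (\log\log T)^2$ by combining the fourth moment of $\zeta$ with a careful treatment of $1/\zeta(2s)$ on the line $\Re(2s) \geq 1$, where $\zeta$ has no zeros. Multiplying the two factors yields $I(\sigma) \ll (\log T)^4 \log \log T$ uniformly in $\sigma$, and substituting back into the $\sigma$-integral gives the claimed bound.

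The main obstacle is precisely the second mean value, $\int_{T/2}^T |\zeta(s+z)|^4/|\zeta(2s)|^2\, dt$. The naive pointwise bound $1/|\zeta(2s)| \ll \log T$ (valid since $2\sigma \geq 1$) would contribute $T(\log T)^6$ and produce only the weaker bound $I(\sigma) \ll (\log T)^5$. The improvement to $\log \log T$ is obtained by a dyadic decomposition of $[T/2, T]$ according to the size of $|\zeta(2s)|^{-1}$: the measure of $\{t \in [T/2, T] : |\zeta(2s)|^{-1} > V\}$ decays sufficiently rapidly in $V$ that only $O(\log \log T)$ dyadic levels $V = 2^j$ contribute non-negligibly, so the $(\log T)^2$ loss from the uniform bound is replaced by a $(\log \log T)^2$ loss. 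A secondary technicality is that one must apply Cauchy--Schwarz and the Dirichlet series manipulations carefully enough that the shift by $z$ does not introduce extra logarithmic factors beyond those already accounted for; this is handled by the fact that the moment estimates for $\zeta$ are uniform in translations.
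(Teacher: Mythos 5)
Your opening reduction is exactly what is intended: since $|x^{s}|=x^{\sigma}$ and $\int_{1/2}^{1}x^{\sigma}\,d\sigma=(x-x^{1/2})/\log x$, while $|s|\asymp T$ throughout the range of integration, the lemma follows once the inner $t$-integral is controlled. The paper itself offers no more detail than this; it simply invokes the fourth-moment bound $\int_{T/2}^{T}|\zeta(\sigma+i(t+t'))|^{4}\,dt\ll T\log^{4}T$ and defers the substance to Lemmas 3.3 and 3.4 of \cite{RamachandraSankaranarayanan}. The problem is that everything hard in the lemma lives in the factor $1/\zeta(2s)$, and your treatment of it has a genuine hole. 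The Cauchy--Schwarz step buys nothing: it trades $\int|\zeta(s+z)|^{4}|\zeta(2s)|^{-1}dt$ for $\int|\zeta(s+z)|^{4}|\zeta(2s)|^{-2}dt$, which contains the same (indeed a worse) instance of the difficulty, namely negative powers of $\zeta$ on or near the line $\operatorname{Re}(w)=1$. And your proof of that second mean value is not substantiated. At $\sigma=1/2$ the point $2s$ sits exactly on the $1$-line, where the only unconditional pointwise bound is $1/\zeta(1+iu)\ll(\log u)^{2/3}(\log\log u)^{1/3}$, far larger than $\log\log T$; so the claim that ``only $O(\log\log T)$ dyadic levels contribute non-negligibly'' is precisely the theorem to be proved, not an observation. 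To run the level-set argument you need both a measure bound for $E_{V}=\{t:|\zeta(2s)|^{-1}>V\}$ and a way to bound $\int_{E_{V}}|\zeta(s+z)|^{4}dt$ over a set of small measure. Chebyshev applied to $\int|\zeta(1+2it)|^{-2k}dt\ll_{k}T$ gives only $|E_{V}|\ll_{k}TV^{-2k}$, and the only available tools for the restricted fourth moment are the pointwise bound $\zeta(\tfrac12+it)\ll t^{1/6}$ (which costs a power of $T$) or higher moments of $\zeta$ (which are not known); these do not combine to close the argument, and you supply no substitute.

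You also miss the one structural feature that makes this lemma genuinely easier than Lemma \ref{ramsan01}: the integration over $\sigma$. For $\sigma\ge\tfrac12+c/\log T$ the Euler product gives the elementary bound $1/|\zeta(2s)|\le\zeta(2\sigma)/\zeta(4\sigma)\ll(2\sigma-1)^{-1}$, and integrating $x^{\sigma}(2\sigma-1)^{-1}$ over $\sigma$ shows this region is harmless; only the thin strip $\tfrac12\le\sigma\le\tfrac12+c/\log T$ requires genuine information about $1/\zeta$ on the $1$-line, which is exactly what the Ramachandra--Sankaranarayanan lemmas provide. By insisting on the uniform bound $I(\sigma)\ll(\log T)^{4}\log\log T$ all the way down to $\sigma=1/2$, you have reduced the lemma to its hardest case and then left that case unproved. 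A correct write-up should either split the $\sigma$-range as above and quote the $1$-line estimate of \cite{RamachandraSankaranarayanan} for the thin strip, or reproduce their argument; the sketch as written does not do either.
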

\section{Proof of Proposition \ref{rr01}}
From \eqref{cohenmoebius} we see that
\begin{align}
C_{k,\beta }(x,y) = \sum\limits_{n \leqslant y} {{{\bigg( {\sum\limits_{q \leqslant y} {c_q^{(\beta )}(n)} } \bigg)}^k}}  
&=\prod_{j=1}^{k}\sum\limits_{{d_j}{k_j} \leqslant x} d_j^{\beta}\mu(k_j) \sum_{\substack{n\leq y\\d_j^{\beta}\mid n}}1
&=\prod_{j=1}^{k}\sum\limits_{{d_j}{k_j} \leqslant x} d_j^{\beta}\mu(k_j)\left\lfloor\frac{y}{[d_1^{\beta},\dots,d_k^{\beta}]}\right\rfloor, \nonumber
\end{align}
where $[d_1^{\beta},\dots,d_k^{\beta}]$ denotes the least common multiple of the integers $d_1^{\beta}, d_2^{\beta},\dots, d_k^{\beta}$. Let $(d_1^{\beta}, \dots, d_k^{\beta})$ denote the greatest common divisor of the integers $d_1^{\beta}, d_2^{\beta},\dots, d_k^{\beta}$. Then one derives
\begin{align}
C_{k,\beta }(x,y) &= y\prod_{j=1}^{k}\sum\limits_{{d_j}{k_j} \leqslant x}(d_1^{\beta}, \dots, d_k^{\beta})\mu(k_j) + O\bigg(\prod_{j=1}^{k}\sum\limits_{{d_j}{k_j} \leqslant x} d_j^{\beta} \bigg)  \label{ckbeq} 
\end{align}
for $k\geq 2$. If $k=1$, then
\begin{align}
C_{1,\b}(x,y)=y\sum_{dk\leq x}\mu(k)+O\bigg(\sum_{dk\leq x}d^\b\bigg)=y+O\bigg(\sum_{dk\leq x}d^\b\bigg). \nonumber
\end{align}
By the aid of the fact that
\begin{align}\sum\limits_{n \leqslant x} {{n^\beta }}  = x^{1 + \beta } + O(x^{\beta}) \nonumber \end{align}
we deduce that
\begin{align}
\sum\limits_{dk \leqslant x} {{d^\beta }\mu (k)} = \sum\limits_{k \leqslant x} {\mu (k)\left( {\frac{1}{{1 + \beta }}{{\left( {\frac{x}{k}} \right)}^{1 + \beta }} + O\left( {\frac{{{x^\beta }}}{{{k^\beta }}}} \right)} \right)}
& = \frac{{{x^{1 + \beta }}}}{{1 + \beta }}\sum\limits_{k \leqslant x} {\frac{{\mu (k)}}{{{k^{1 + \beta }}}}}  + O\bigg( {{x^\beta }\sum\limits_{k \leqslant x} {\frac{{1}}{{{k^\beta }}}} } \bigg) \nonumber \\
&= \frac{{{x^{1 + \beta }}}}{{(1 + \beta )\zeta (1 + \beta )}} + O({x^\beta }\log^{\lfloor 1/\b\rfloor}x). \label{dbmu}
\end{align}
Let $d$ be the greatest common divisor of $d_1,\dots,d_k$. The first sum on the right-hand side of \eqref{ckbeq} can be written as 
\begin{align}
y\sum_{d\leq x}d^{\beta}\prod_{j=1}^{k}\sum_{\substack{l_jk_j \leq x/d\\(l_1,\dots,l_k)=1}}\mu(k_j)&=y\sum_{d\leq x}d^{\beta}\prod_{j=1}^{k}\sum_{l_jk_j \leq x/d}\mu(k_j)\sum_{l\mid (l_1,\dots,l_k)}\mu(l) \nonumber \\
&=y\sum_{dl\leq x}d^{\beta}\mu(l)\left(\sum_{mn \leq x/dl}\mu(n)\right)^k \nonumber \\
&=\frac{y{{x^{1 + \beta }}}}{{(1 + \beta )\zeta (1 + \beta )}} + O(y{x^\beta }\log^{\lfloor 1/\b\rfloor}x), \nonumber 
\end{align}
where in the last step we used \eqref{dbmu}. The last term on the right-hand side of \eqref{ckbeq} can be estimated as 
\begin{align} \prod_{j=1}^{k}\sum\limits_{{d_j}{k_j} \leqslant x} d_j^{\beta}\ll\bigg(\sum\limits_{{d}{k} \leqslant x} {d^\beta }\bigg)^k\ll_k x^{(1+\beta)k}\log^k x. \nonumber 
\end{align}
This ends the proof of the proposition.
\section{Proof of Theorem \ref{theormsigma}}
If $\beta=1$ and $z=0$, the study of the error term in this asymptotic formula is the well-known Dirichlet divisor problem. Thus, we exclude this case. Let $z=a+ib$, $a\leq 0$, $b \in \mathbb{R}$, and $c=1+1/\log x$. Then by Lemma \ref{montvon} we write 
\begin{align}
\sum_{n\leq x}\sigma_{z,\beta}(n)= \frac{1}{{2\pi i}}\int_{c - iT}^{c + iT} {\zeta (s)\zeta (\beta (s - z))\frac{{{x^s}}}{s}ds}  + E(z, \b;x), \nonumber
\end{align}
where 
\begin{align}
 E(z, \b;x)\ll  \sum_{\substack{x/2 < n < 2x \\ n \ne x}}  \sigma _{a,\beta}(n)\min \bigg( 1, \frac{x}{T|x-n|} \bigg) + \frac{4^{c}+x^{c}}{T} \sum_{n=1}^{\infty} \frac{\sigma _{a,\beta}(n)}{n^{c}}. \nonumber 
\end{align}
From \eqref{sigbz} one has
\begin{align}
\s_{a,\b}(n)\leq \twopartdef{n^{a\b}\s_{0,\beta}(n),}{a>0}{\s_{0,\beta}(n),}{a\leq 0.} \label{sab}
\end{align}
Also we note that
\begin{align}
\sigma_{0,\b}(p)=\twopartdef{2,}{\beta=1}{1,}{\beta\geq 2.}\label{sigmap}
\end{align}
We choose $T=x^{2/3}$. If $0<|x-n|\leq x^{1/3}$, then from \eqref{sab}, \eqref{sigmap}, Lemmas \ref{shiu} and \ref{merest} we have
\begin{align}
\sum_{\substack{0<|x-n|<x^{1/3}}}  \sigma _{a,\beta}(n)\min \bigg( 1, \frac{x}{T|x-n|} \bigg)\ll \sum_{\substack{0<|x-n|<x^{1/3}}}  \sigma _{0,\beta}(n)\ll x^{1/3}\log x.\label{es1}
\end{align} 
For $x+x^{1/3}<n<2x$ one has
\begin{align}
\sum_{\substack{x+x^{1/3}<n<2x}}  \sigma _{a,\beta}(n)\min \bigg( 1, \frac{x}{T|x-n|} \bigg)&\ll\frac{x}{T}\sum_{\substack{x+x^{1/3}<n<2x}}  \frac{\sigma _{0,\beta}(n)}{n-x} \nonumber \\
&\ll \frac{x}{T}\sum_{l\ll \log x}\frac{1}{U}\sum_{\substack{U<n-x<2U\\U=2^{l}x^{1/3}}}\s_{0,\b}(n). \nonumber
 \end{align}
Now by use of Lemmas \ref{shiu} and \ref{merest} we deduce that
\begin{align}
\sum_{\substack{x+x^{1/3}<n<2x}}  \sigma _{a,\beta}(n)\min \bigg( 1, \frac{x}{T|x-n|} \bigg)\ll \frac{x}{T}\log^2x.\label{es2}
\end{align}
The same bound holds when $x/2<n<x-x^{1/3}$. Since 
\begin{align}
\z(\s)\sim \frac{1}{\s-1}\label{zs1}
\end{align}
when $\s\to 1+$, then from Lemma \ref{crum01} we find that
\begin{align}
 \frac{4^{c}+x^{c}}{T} \sum_{n=1}^{\infty} \frac{\sigma _{a,\beta}(n)}{n^{c}}\ll \frac{x}{T}\log ^2x.\label{es3}
\end{align}
Hence from \eqref{es1}, \eqref{es2}, and \eqref{es3} we deduce that
\begin{align}
E(z,\b;x)\ll x^{1/3}\log^2x. \nonumber
\end{align}
Now we take the integral around the rectangle $\mathcal{D} = [-\a-iT,c-iT,c+iT,-\a+iT]$, where $\a=-a+1/\log x$. By the residue theorem one writes
\begin{align}
\frac{1}{2\pi i}\int_{\mathcal{D}}{\zeta (s)\zeta (\beta (s - z))\frac{{{x^s}}}{s}ds}=R, \nonumber 
\end{align}
where $R$ is the sum of the residues at the simple poles $s=1$, $s=z+1/\b$, and $s=0$. The functional equation of $ \z(s) $ is
\begin{align}
\z(s)=\pi^{s-\frac{1}{2}}\frac{\G\left(\frac{1-s}{2}\right)}{\G\left(\frac{s}{2}\right)}\z(1-s):=\chi(s)\z(1-s).\label{fe}
\end{align}
From Stirling's formula for the gamma function \cite[p. 224]{cop} one has
\begin{align}
\chi(\s + it) = \bigg(\frac{2\pi}{t}\bigg)^{\s+it-1/2          }e^{i(t+\pi/4)}\bigg(1+O\bigg(\frac{1}{t}\bigg)\bigg)\label{stirfor}
\end{align}
for fixed $\s$ and $t\geq t_0>0$. Hence by \eqref{fe} we have \begin{align}
\zeta(s) \ll |t|^{\tfrac{1}{2}-\sigma}\label{zblhs}
\end{align}
for $\sigma  < 0$. Also we recall the bound $\z(1+it)\ll \log t/\log\log t$ from \cite[Theorem 5.16]{TitchmarshHeath-Brown}. Therefore from \eqref{fe} and \eqref{stirfor} we have $\z(it)\ll \sqrt{t}\log t/\log \log t$ for $t\geq 2$ . Then by the Phragm\'{e}n-Lindel\"{o}f principle \cite[p.~176]{titch3} one obtains
\begin{align}
\z(\s+it)\ll t^{\frac{1}{2}(1-\s)}\log t/\log \log t\label{zbcritstr}
\end{align}
for $0\leq \s\leq 1$ and $t\geq 2$.
For the upper horizontal integral we have 
\begin{align}
  \int_{ - a + iT}^{c + iT}& {\zeta (s)\zeta (\beta (s - z))\frac{{{x^s}}}{s}ds} \nonumber \\
&=\frac{1}{T}\bigg(\int_{-\a}^{-\frac{1}{2\log x}}+\int_{-\frac{1}{2\log x}}^{a+\frac{1}{\b}+\frac{1}{2\log x}}+\int_{a+\frac{1}{\b}+\frac{1}{2\log x}}^{c}\bigg) \zeta (\s+iT)\zeta (\beta (\s+iT - z)){x^\s}d\s \nonumber \\
&=I_1+I_2+I_3. \nonumber
\end{align}
Using the bound \eqref{zblhs} for $\z(s)$ and \eqref{zbcritstr} for $ \z(\b(s-z)) $ we find 
\begin{align}
I_1\ll \frac{1}{T}\int_{-\a}^{-\frac{1}{2\log x}}T^{\frac{1}{2}-\s}T^{\frac{1}{2}(1-\b \s+\b a)}\log T x^\s d\s\ll x^{a/3}. \nonumber
\end{align}
Using the bound \eqref{zbcritstr} we have 
\begin{align}
I_2\ll \frac{1}{T}\int_{-\frac{1}{2\log x}}^{a+\frac{1}{\b}+\frac{1}{2\log x}}T^{\frac{1}{2}(1-\s)}T^{\frac{1}{2}(1-\b \s+\b a)}\log^2 T x^\s d\s\ll x^{\frac{2-\b+2a\b}{3\b}}\log x. \nonumber 
\end{align}
Similarly 
\begin{align}
I_3\ll \frac{\log ^2T}{T}\int_{a+\frac{1}{\b}+\frac{1}{2\log x}}^{c}T^{\frac{1}{2}(1-\s)}x^\s d\s\ll x^{1/3}\log x. \nonumber
\end{align}
Therefore we obtain 
\begin{align}
 \int_{ - a + iT}^{c + iT}& {\zeta (s)\zeta (\beta (s - z))\frac{{{x^s}}}{s}ds}\ll x^{1/3}\log x. \nonumber
\end{align}
A similar estimate holds for the lower horizontal line. Next we will bound the left vertical part. This is given by
\begin{align}
\int_{- \a - iT}^{-\a + iT} \zeta (s)\zeta (\beta (s - z))\frac{x^s}{s} ds & = \int_{- \a - iT}^{-\a + iT} \chi(s) \chi(\beta (s - z)) \zeta(1-s)\zeta(\beta (1 - s - z))\frac{x^s}{s} ds \nonumber \\
& = \sum\limits_{n = 1}^\infty  \frac{\sigma _{z,\beta }(n)}{n} \int_{- \a - iT}^{-\a + iT} \chi(s) \chi(\beta (s - z))\frac{(xn)^s}{s} ds \nonumber \\
& = i x^{-\a} \sum\limits_{n = 1}^\infty  \frac{\sigma _{z,\beta }(n)}{n^{1+\a}} \int_{-T}^{T} \frac{\chi(-\a + it) \chi(\beta (-\a + it - z))}{-\a + it} (nx)^{it} dt,\label{lhs}
\end{align}
and where in the first step we used the functional equation \eqref{fe}.
Note that 
\begin{align}
 \frac{1}{-\a + it} = \frac{1}{it} + O\left( \frac{1}{t^2} \right).\label{iden}
\end{align}
Applying \eqref{stirfor} and \eqref{iden} yields
\begin{align}
\int_{2b+1}^{T} & \frac{\chi(-\a + it) \chi(\beta (-\a + it - z))}{-\a + it} (nx)^{it} dt \nonumber \\
&\qquad\ll \int_{2b+1}^{T} e^{it(\log(nx)+\log (2\pi e)-\log t)+\b(\log(2\pi e)-\log\b(t-b))}t^{\a+\b\a+\b a}dt. \nonumber
\end{align}
Clearly $\a+\b\a+\b a>0$. Let $F(t):=t(\log(nx)+\log (2\pi e)-\log t)+\b(\log(2\pi e)-\log\b(t-b))$. Then 
\begin{align}
F''(t)=-\frac{1}{t}-\frac{(t-2b)\b}{(t-b)^2}<-\frac{1}{T}-\frac{\b}{T^2}. \nonumber
\end{align}
Therefore by the aid of Lemma \ref{expint} we deduce that
\begin{align}
\int_{2b+1}^{T} & \frac{\chi(-\a + it) \chi(\beta (-\a + it - z))}{-\a + it} (nx)^{it} dt\ll \frac{T^{\a+\b\a+\b a+1}}{\sqrt{T-\b}}. \nonumber
\end{align}
Combining this with \eqref{lhs} we finally get that 
\begin{align}
\int_{- \a - iT}^{-\a + iT} \zeta (s)\zeta (\beta (s - z))\frac{x^s}{s} ds\ll x^{\frac{1+a}{3}}\log^2 x. \nonumber
\end{align}
Now we compute the residues
\begin{align} 
\mathop {\operatorname{res} }\limits_{s = 0} \zeta (s)\zeta (\beta (s - z))\frac{{{x^s}}}{s} &=  - \frac{1}{2}\zeta ( - \beta z), \label{res0} \\ 
\mathop {\operatorname{res} }\limits_{s = 1} \zeta (s)\zeta (\beta (s - z))\frac{{{x^s}}}{s} &= \zeta (\beta (1 - z))x, \nonumber 
\end{align}
and 
\begin{align}
 \mathop {\operatorname{res} }\limits_{s = {\beta ^{ - 1}} + z} \zeta (s)\zeta (\beta (s - z))\frac{{{x^s}}}{s} = \frac{{\zeta ({\beta ^{ - 1}} + z)}}{{1 + \beta z}}{x^{{\beta ^{ - 1}} + z}}.\label{resb}
\end{align} 
Clearly the residue in \eqref{res0} is a constant. When $\b\geq 3$, the residue in \eqref{resb} is absorbed by the error term.
This completes the proof of the theorem.
\section{Proof of Theorem \ref{theormsigma2}}
Let $z_1=a_1+ib_1$, $z_2=a_2+ib_2$, $a_1\leq 0$, $a_2\leq 0$, $a_1+a_2>-1$, $|a_1-a_2|<1/\b$ and $b_1, b_2 \in \mathbb{R}$. Define
\begin{align}
f(z_1,z_2,s;\beta ) := \frac{{\zeta (s)\zeta (\beta (s - z_1))\zeta (\beta (s - z_2))\zeta (\beta (s - z_1 - z_2))}}{{\zeta (\beta (2s - z_1 - z_2))}} \nonumber
\end{align}
and let $c=1+{1/\log x}$.
Then in the view of Lemmas \ref{crum02} and \ref{montvon} we may write 
\begin{align}
\sum\limits_{n \leqslant x} {\sigma _{z_1,\beta}(n)\sigma _{z_2,\beta}(n)}=\frac{1}{{2\pi i}}\int_{c - iT_0 }^{c + iT_0 } {f(z_1,z_2,s;\beta )\frac{{{x^s}}}{s}ds} +E(z_1,z_2,\beta;x),\label{dublesigsum}
\end{align}
where 
\begin{align}
E(z_1,z_2,\beta;x)\ll \sum_{\substack{x/2 < n < 2x \\ n \ne x}}  \sigma _{a_1,\beta}(n)\sigma _{a_2,\beta}(n)\min \bigg( 1, \frac{x}{T_0|x-n|} \bigg) + \frac{4^{c}+x^{c}}{T_0} \sum_{n=1}^{\infty} \frac{\sigma _{a_1,\beta}(n)\sigma _{a_2,\beta}(n)}{n^{c}}. \label{ezzb}
\end{align}
Let $T=2x^{2/3}$ and $T/2<T_0<T$. Now we estimate the right-hand side of \eqref{ezzb}. 
We consider $x+x^{1/3}<n<2x$. Applying \eqref{sab} to the first term of the right-hand side of \eqref{ezzb} we obtain 
\begin{align}
\sum_{x^{1/3}<n-x<x}\sigma _{a_1,\beta}(n)\sigma _{a_2,\beta}(n)\min \bigg( 1, \frac{x}{T_0|x-n|} \bigg)&=\frac{x}{T_0}\sum_{x^{1/3}<n-x<x}\frac{\sigma _{a_1,\beta}(n)\sigma _{a_2,\beta}(n)}{n-x} \nonumber \\
&\leq \frac{x}{T_0}\sum_{0\leq l\ll\log x}\sum_{\substack{U<n-x<2U\\U=2^lx^{1/3}}}\frac{(\sigma _{0,\beta}(n))^2}{n-x} \nonumber \\
&\leq \frac{x}{T_0}\sum_{0\leq l\ll \log x}\frac{1}{U}\sum_{\substack{x+U<n<x+2U\\U=2^lx^{1/3}}}(\sigma _{0,\beta}(n))^2.\label{interest}
\end{align}
From \eqref{sigmap} and Lemmas \ref{shiu} and \ref{merest} we deduce
\begin{align}
\sum_{x+U<n<x+2U}(\sigma _{0,\beta}(n))^2\ll \frac{U}{\log x}\exp(4\log\log x)\ll U\log^3x.\label{shortest}
\end{align}
Invoking this in \eqref{interest} we finally have
\begin{align}
\sum_{x^{1/3}<n-x<x}\sigma _{a_1,\beta}(n)\sigma _{a_2,\beta}(n)\min \bigg( 1, &\frac{x}{T_0|x-n|} \bigg)\ll \frac{x}{T_0}\log^4 x.\label{e2}
\end{align}
Similarly 
\begin{align}
\sum_{x/2<n<x-x^{1/3}}\sigma _{a_1,\beta}(n)\sigma _{a_2,\beta}(n)\min \bigg( 1, &\frac{x}{T_0|x-n|} \bigg)\ll \frac{x}{T_0}\log^4 x.\label{e3}
\end{align}
Let $x-x^{1/3}\leq n\leq  x+x^{1/3}$. Using \eqref{sab} and \eqref{shortest} one has 
\begin{align}
\sum_{0<|x-n|\leq x^{1/3}}\sigma _{a_1,\beta}(n)\sigma _{a_2,\beta}(n)\min \bigg( 1, \frac{x}{T_0|x-n|} \bigg) &\leq \sum_{0<|x-n|\leq x^{1/3}}\sigma _{0,\beta}(n)\sigma _{0,\beta}(n) \nonumber \\
&\ll x^{1/3}\log^3x\label{e1}.
\end{align}
From \eqref{zs1} and Lemma \ref{crum02} we have
\begin{align}
 \frac{4^{c}+x^{c}}{T_0} \sum_{n=1}^{\infty} \frac{\sigma _{a_1,\beta}(n)\sigma _{a_2,\beta}(n)}{n^{c}}\ll \frac{x}{T_0}f(a_1,a_2,c;\b)\ll\frac{x}{T_0}\log^4x ,\label{e4}
\end{align}
for $\b\geq 1$.
Combining \eqref{e2}, \eqref{e3}, \eqref{e1} and \eqref{e4} we obtain 
\begin{align}
E(z_1,z_2,\b;x)\ll  x^{1/3}\log^4x.\label{ezbx}
\end{align}
Let $\l=\tfrac{1}{2}(a_1+a_2+1/\b)$. Suppose $\mathcal{R}$ is a positively oriented contour with vertices $c\pm iT_0$ and $\l\pm iT_0$. By residue theorem we have 
\begin{align}
\frac{1}{{2\pi i}}\int_{\mathcal{R}} {f(z_1,z_2,s;\beta )\frac{{{x^s}}}{s}ds}=R_0,\label{contint}
\end{align}
where $R_0$ is the sum of residues in side the contour $\mathcal{R}$. By H\"older's inequality \cite[p.~382]{titch3} one has 
\begin{align}
\bigg(&\int_{\l-iT_0}^{\l+iT_0}f(z_1,z_2,s;\beta )\frac{{{x^s}}}{s}ds\bigg)^4 \nonumber \\
&\ll\int_{-T_0}^{T_0}\frac{|\zeta(\l+it)|^4x^\l}{|\zeta(\b(2(\l+it)-z_1-z_2))(\l+it)|}dt\int_{-T_0}^{T_0}\frac{|\zeta(\b(\l+it-z_1))|^4x^\l}{|\zeta(\b(2(\l+it)-z_1-z_2))(\l+it)|}dt \nonumber \\
&\quad\times\int_{-T_0}^{T_0}\frac{|\zeta(\b(\l+it-z_2))|^4x^\l}{|\zeta(\b(2(\l+it)-z_1-z_2))(\l+it)|}dt\int_{-T_0}^{T_0}\frac{|\zeta(\b(\l+it-z_1-z_2))|^4x^\l}{|\zeta(\b(2(\l+it)-z_1-z_2))(\l+it)|}dt.\label{lvl}
\end{align} 
By Lemma \ref{ramsan01} we find that 
\begin{align}
 \int_{-T_0}^{T_0}\frac{|\zeta(\b(\l+it-z_1-z_2))|^4x^{\l}}{|\zeta(\b(2(\l+it)-z_1-z_2))(\l+it)|}dt&\ll x^\l (\log^5T_0)\log\log T_0. \nonumber
\end{align}
Let $a_1-a_2\geq 0$. Then by Lemma \ref{ramsan01} we have 
\begin{align}
 \int_{-T_0}^{T_0}\frac{|\zeta(\b(\l+it-z_2))|^4x^{\l}}{|\zeta(\b(2(\l+it)-z_1-z_2))(\l+it)|}dt\ll x^{\l}(\log^5T_0)\log\log T_0. \nonumber
\end{align}
  By the functional equation \eqref{fe} and \eqref{stirfor} one obtains 
\begin{align}
 \int_{-T_0}^{T_0}&\frac{|\zeta(\b(\l+it-z_1))|^4}{|\zeta(\b(2(\l+it)-z_1-z_2))(\l+it)|}dt \nonumber \\
&=\int_{-T_0}^{T_0}|\chi(\b(\l+it-z_1))|\frac{|\zeta(1-\b(\l+it-z_1))|^4}{|\zeta(\b(2(\l+it)-z_1-z_2))(\l+it)|}dt \nonumber \\
&\ll T_0^{(a_1-a_2)\b/2}\int_{-T_0}^{T_0}\frac{|\zeta(1/2+(a_1-a_2)\b/2+i(t+t'))|^4}{|\zeta(1+2it))(\l+i(t+t''))|}dt, \nonumber
\end{align}
where in last step we made a suitable change of variable. Finally, by Lemma \ref{ramsan01} we obtain 
\begin{align}
 \int_{-T_0}^{T_0}&\frac{\zeta(\b(\l+it-z_1))|^4x^\l}{|\zeta(\b(2(\l+it)-z_1-z_2))(\l+it)|}dt\ll x^\l T_0^{(a_1-a_2)\b/2}(\log^5T_0)\log\log T_0. \nonumber
\end{align}
The case $a_1-a_2\leq 0$ can be treated similarly. Therefore for any sign of $a_1-a_2$ we have 
\begin{align}
 \int_{-T_0}^{T_0}&\frac{\zeta(\b(\l+it-z_i))|^4x^\l}{|\zeta(\b(2(\l+it)-z_1-z_2))(\l+it)|}dt\ll x^\l T_0^{|a_1-a_2|\b/2}(\log^5T_0)\log\log T_0, \nonumber
\end{align}
for $i=1,2$.
Using a similar argument one can deduce that
\begin{align}
 \int_{-T_0}^{T_0}\frac{|\zeta(\l+it)|^4x^{\l}}{|\zeta(\b(2(\l+it)-z_1-z_2))(\l+it)|}dt\ll x^{\l}T_0^{1/2-\l}(\log^5T_0)\log\log T_0 . \nonumber
\end{align}
Thus from \eqref{lvl} we have 
\begin{align}
\int_{\l-iT_0}^{\l+iT_0}f(z_1,z_2,s;\beta )\frac{{{x^s}}}{s}ds\ll \max\bigg(x^{\tfrac{1}{2\b}+\tfrac{a_1+a_2}{2}+\tfrac{\b|a_1-a_2|}{3}},x^{\tfrac{1}{3}+\tfrac{1}{6\b}+\tfrac{a_1+a_2}{6}}\bigg)(\log^5x)\log\log x .\label{leftvertline}
\end{align}
Next we compute the integral
\begin{align}
\int_{\l}^{c}\int_{T/2}^{T}f(z_1,z_2,\s+it;\beta )\frac{{{x^{\s+it}}}}{\s+it}d\s dt. \nonumber
\end{align}
Using the functional equation \eqref{fe} we write 
\begin{align}
&\int_{\l}^{c}\int_{T/2}^{T}\frac{|\zeta(\s+it)|^4x^{\s}}{|\zeta(\b(2(\s+it)-z_1-z_2))(\s+it)|}d\s dt \nonumber \\
&=\int_{\l}^{1/2}\int_{T/2}^{T}\frac{|\chi(\s+it)||\zeta(1-\s-it)|^4x^{\s}}{|\zeta(\b(2(\s+it)-z_1-z_2))(\s+it)|}d\s dt \nonumber \\
&\qquad+\int_{1/2}^{c}\int_{T/2}^{T}\frac{|\zeta(\s+it)|^4x^{\s}}{|\zeta(\b(2(\s+it)-z_1-z_2))(\s+it)|}d\s dt. \nonumber
\end{align}
Now by the aid of \eqref{stirfor} and Lemma \ref{ramsan02} we deduce that
\begin{align}
\int_{\l}^{c}\int_{T/2}^{T}\frac{|\zeta(\s+it)|^4x^{\s}}{|\zeta(\b(2(\s+it)-z_1-z_2))(\s+it)|}d\s dt\ll \frac{x}{\log x}(\log^4T)\log\log T. \nonumber
\end{align}
If $a_1-a_2\geq 0$, then similarly we can find that
\begin{align}
\int_{\l}^{c}\int_{T/2}^{T}\frac{|\zeta(\b(\s+it-z_1)|^4x^{\s}}{|\zeta(\b(2(\s+it)-z_1-z_2))(\s+it)|}d\s dt\ll \frac{x}{\log x}(\log^4T)\log\log T. \nonumber
\end{align}
From Lemma \ref{ramsan02} we have 
\begin{align}
\int_{\l}^{c}\int_{T/2}^{T}\frac{|\zeta(\b(\s+it-z_2)|^4x^{\s}}{|\zeta(\b(2(\s+it)-z_1-z_2))(\s+it)|}d\s dt\ll \frac{x}{\log x}(\log^4T)\log\log T \nonumber
\end{align}
and
\begin{align}
\int_{\l}^{c}\int_{T/2}^{T}\frac{|\zeta(\b(\s+it-z_1-z_2)|^4x^{\s}}{|\zeta(\b(2(\s+it)-z_1-z_2))(\s+it)|}d\s dt\ll \frac{x}{\log x}(\log^4T)\log\log T. \nonumber
\end{align}
Therefore by H\"older's inequality
\begin{align}
\int_{\l}^{c}\int_{T/2}^{T}f(z_1,z_2,\s+it;\beta )\frac{{{x^{\s+it}}}}{\s+it}d\s dt\ll \frac{x}{\log x}(\log^4T)\log\log T. \nonumber
\end{align}
Hence we can choose a suitable $T_0$ so that $T/2\leq T_0\leq T$ and 
\begin{align}
\int_{\l}^{c}f(z_1,z_2,\s+iT_0;\beta )\frac{{{x^{\s+iT_0}}}}{\s+iT_0}d\s&\ll \frac{x}{T_0\log x}(\log^4T_0)\log\log T_0 \nonumber \\&\ll x^{1/3}(\log^3 x)\log\log x.\label{horline}
\end{align}
Finally combining \eqref{dublesigsum}, \eqref{ezbx}, \eqref{contint}, \eqref{leftvertline} and \eqref{horline} we find 
\begin{align}
\sum\limits_{n \leqslant x} {\sigma _{z_1,\beta}(n)\sigma _{z_2,\beta}(n)}=R_0+O\bigg(\max\bigg(x^{\tfrac{1}{2\b}+\tfrac{a_1+a_2}{2}+\tfrac{\b|a_1-a_2|}{3}},x^{\tfrac{1}{3}+\tfrac{1}{6\b}+\tfrac{a_1+a_2}{6}}\bigg)(\log^5x)\log\log x\bigg).\label{dublesum1}
\end{align}
Since $z_i\neq 0$, $z_1\neq z_2$, and $|\Re(z_1-z_2)|<1/\b$, then all the poles are simple. The residues at the simple poles $s=1$, $s=z_1+1/\b$, $s=z_2+1/\b$, and $s=z_1+z_2+1/\b$ are given by
\begin{align}
  \mathop {\operatorname{res} }\limits_{s = 1} f(z_1,z_2,s;\beta )\frac{{{x^s}}}{s} &= \frac{{\zeta (\beta (1 - z_1))\zeta (\beta (1 - z_2))\zeta (\beta (1 - z_1 - z_2))}}{{\zeta (\beta (2 - z_1 - z_2))}}x, \nonumber \\ 
  \mathop {\operatorname{res} }\limits_{s = z_1 + 1/\beta } f(z_1,z_2,s;\beta )\frac{{{x^s}}}{s} &= \frac{{\zeta (z_1 + \tfrac{1}{\beta })\zeta (1 + \beta z_1 - \beta z_2)\zeta (1 - \beta z_2)}}{{(z_1\beta  + 1)\zeta (2 + \beta z_1 - \beta z_2)}}{x^{z_1 + \tfrac{1}{\beta }}}, \label{erterm1} \\
  \mathop {\operatorname{res} }\limits_{s = z_2 + 1/\beta } f(z_1,z_2,s;\beta )\frac{{{x^s}}}{s} &= \frac{{\zeta (z_2 + \tfrac{1}{\beta })\zeta (1 + \beta z_2 - \beta z_1)\zeta (1 - \beta z_1)}}{{(\beta z_2 + 1)\zeta (2 + \beta z_2  - \beta z_1)}}{x^{z_2 + \tfrac{1}{\beta }}},\label{erterm2}
\end{align}
and
\begin{align} 
\mathop {\operatorname{res} }\limits_{s = z_1 + z_2 + 1/\beta } f(z_1,z_2,s;\beta )\frac{{{x^s}}}{s} = \frac{{\zeta (z_1 + z_2 + \tfrac{1}{\beta })\zeta (\beta z_2 + 1)\zeta (\beta z_1 + 1)}}{{(\beta z_1 + \beta z_2 + 1)\zeta (2 + \beta z_1 + \beta z_2)}}{x^{z_1 + z_2 + \tfrac{1}{\beta }}}.\label{erterm3}
\end{align}
If $ \b\geq 3 $, then \eqref{erterm1}, \eqref{erterm2}, and \eqref{erterm3} are smaller than the error term of the right-hand side of \eqref{dublesum1}. Therefore for $\b\geq 3$ we find 
\begin{align}
R_0= \frac{{\zeta (\beta (1 - z_1))\zeta (\beta (1 - z_2))\zeta (\beta (1 - z_1 - z_2))}}{{\zeta (\beta (2 - z_1 - z_2))}}x. \nonumber
\end{align}
For $\b=1$ and $-1/2<\Re(z_1),\Re(z_2), \Re(z_1+z_2)<0$ we find 
\begin{align}
R_0=x\bigg(&\frac{{\zeta (1 - z_1)\zeta (1 - z_2)\zeta (1 - z_1 - z_2)}}{{\zeta  (2 - z_1 - z_2)}}+\frac{{\zeta (z_1 + 1)\zeta (1 +  z_1 -  z_2)\zeta (1 - z_2)}}{{(z_1 + 1)\zeta (2 +  z_1 - z_2)}}x^{z_1} \nonumber \\
&\quad+\frac{{\zeta (z_2 + 1)\zeta (1 + z_2 - z_1)\zeta (1 - z_1)}}{{(z_2 + 1)\zeta (2 + z_2  -  z_1)}}x^{z_2}+\frac{{\zeta (z_1 + z_2 + 1)\zeta ( z_2 + 1)\zeta ( z_1 + 1)}}{{( z_1 +  z_2 + 1)\zeta (2 +  z_1 +  z_2)}}x^{z_1+z_2}\bigg). \nonumber
\end{align}
Finally for $\b=2$ and $-1/10<\Re(z_1),\Re(z_2),\Re(z_1+z_2)<0$ we obtain 
\begin{align}
R_0=&\sqrt{x}\bigg(\frac{{\zeta (2 (1 - z_1))\zeta (2 (1 - z_2))\zeta (2 (1 - z_1 - z_2))}}{{\zeta (2 (2 - z_1 - z_2))}}\sqrt{x}+\frac{{\zeta (z_1 + \tfrac{1}{2 })\zeta (1 + 2 z_1 - 2 z_2)\zeta (1 - 2 z_2)}}{{(2z_1  + 1)\zeta (2 + 2 z_1 - 2 z_2)}}{x^{z_1}} \nonumber \\
&\qquad+\frac{{\zeta (z_2 + \tfrac{1}{2 })\zeta (1 + 2 z_2 - 2 z_1)\zeta (1 - 2 z_1)}}{{(2 z_2 + 1)\zeta (2 + 2 z_2  - 2 z_1)}}{x^{z_2 }}+ \frac{{\zeta (z_1 + z_2 + \tfrac{1}{2 })\zeta (2 z_2 + 1)\zeta (2 z_1 + 1)}}{{(2 z_1 + 2 z_2 + 1)\zeta (2 + 2 z_1 + 2 z_2)}}{x^{z_1 + z_2}}\bigg). \nonumber
\end{align}
This completes the proof of the theorem.
\section{Proof of Theorem \ref{maintheorem}}
Let us consider 
\begin{align}
\alpha=1+\frac{1}{\log y}, \nonumber
\end{align} 
 $y\geq x$, and $T=y^{2/3}$.
By Lemma \ref{montvon} one finds 
\begin{align}
\sum\limits_{q^\b \le x} {c_{q,\beta}(n)}  = \frac{1}{{2\pi i}}\int_{\alpha  - iT}^{\alpha  + iT} {\frac{{\sigma _{1 - s,\beta }(n)}}{{\zeta (\b s)}}\frac{{{x^s}}}{s}ds}  + {E_1}(x,n),\label{cqbn}
\end{align}
where 
\begin{align}
 {E_1}(x,n)\ll\sum_{\substack{x/2<q^\b<2x\\q^\b\neq x}}| {c_{q,\beta }(n)}|\min\left(1,\frac{x}{T|x-q^\b|}\right)+\frac{x^{\alpha}}{T}\sum_{q=1}^{\infty}\frac{| {c_{q,\beta}(n)}|}{q^{\b\alpha}}.\label{erre1}
\end{align}
Using Lemma \ref{cohen02}, we have
\begin{align}
\sum_{q=1}^{\infty}\frac{| {c_{q,\beta}(n)}|}{q^{\b\alpha}}=\sum_{q=1}^{\infty}\frac{1}{q^{\b\a}}\bigg|\sum_{\substack{d|q \\ {d^\beta }|n}} {{d^\beta }\mu \left( {\frac{q}{d}} \right)}\bigg|\leq \sum_{q=1}^{\infty}\frac{1}{q^{\b\a}}\sum_{\substack{ {d^\beta }|n}} {{d^{\b-\b\a} }}=\s_{-\frac{1}{\log y},\b}(n)\z(\b\a). \nonumber
\end{align}
Therefore from \eqref{zs1} we deduce 
\begin{align}
\sum_{q=1}^{\infty}\frac{| {c_{q,\beta}(n)}|}{q^{\alpha}}\ll\twopartdef{\s_{-\frac{1}{\log y},\b}(n)\log y,}{\b>1,}{\s_{-\frac{1}{\log y},\b}(n),}{\b=1.}\label{erst}
\end{align} 
Similarly 
\begin{align}
\sum_{\substack{x/2<q^\b<2x\\q^\b\neq x}}| {c_{q,\beta }(n)}|\min\left(1,\frac{x}{T|x-q^\b|}\right)&\leq \sum_{\substack{ {d^\beta }|n}} {{d^{\b} }}\sum_{\substack{x/2<d^\b q^\b<2x\\d^b q^\b\neq x}}\min\left(1,\frac{x}{T|x-d^\b q^\b|}\right) \nonumber \\
&\ll \frac{x}{T}\s_{0,\b}(n)\log x. \label{coramest} 
\end{align}
Hence from \eqref{erre1}, \eqref{erst} and \eqref{coramest} we obtain
\begin{align}
{E_1}(x,n)&\ll\frac{x}{T}\s_{0,\b}(n)\log y.\label{estfe1}
\end{align}
Now by \eqref{sigmap} and Lemma \ref{shiu} one has 
\begin{align}
\sum_{n\leq y}E_1(x,n)\ll \twopartdef{\frac{xy}{T}\log y,}{\b>1,}{\frac{xy}{T}\log^2 y,}{\b=1.} \nonumber 
\end{align} 
Summing both sides of \eqref{cqbn} over $n$ and using Theorem \ref{theormsigma} we can write 
\begin{align}
C_{1,\beta}(x,y)= \sum\limits_{n \leqslant y} {\sum\limits_{q^\b \leqslant x} {c_{q,\beta }(n)} }&=\frac{y}{{2\pi i}}\int_{\alpha  - iT}^{\alpha  + iT} {\frac{{{x^s}}}{s}ds}+\frac{y^{1+1/\b}}{{2\pi i}}\int_{\alpha  - iT}^{\alpha  + iT} {\frac{\z(1-s+1/\b)}{1+\b-\b s}\frac{{{(x/y)^s}}}{s\z(\b s)}ds} \nonumber \\
&+O\bigg(y^{1/3}\log^2y\int_{2}^{T}\frac{x^\a}{t|\z(\b(\a+it))|}dt\bigg)  + \sum_{n\leq y}{E_1}(x,n).\label{c1bxy} 
\end{align}
Note that $ (\z(\s+it))^{\pm 1}\ll\log t  $ for $1\leq \s\le 1/\log y$. Thus the third term in the right-hand side of \eqref{c1bxy} is 
\begin{align}
\ll xy^{1/3}\log^2y\log^2T. \nonumber
\end{align}
By \eqref{fourpartiden} the first integral in the right-hand side of \eqref{c1bxy} is 
\begin{align}
y+O\bigg(\frac{yx}{T}\bigg) \nonumber
\end{align}
for $x\geq 2$.
For the second integral we shift the line of integration from $\s=\a$ to $\s=1+\a+1/\b$. The residue due to the simple pole at $s=1+1/\b$ is 
\begin{align}
\mathop {\operatorname{res} }\limits_{s=1+1/\b} \frac{\z(1-s+1/\b)}{1+\b-\b s}\frac{{{y^{1+1/\b}(x/y)^s}}}{s\z(\b s)}=\frac{x^{1+1/\b}}{2(1+\b)\z(1+\b)}. \nonumber
\end{align}
The contribution from the horizontal line is 
\begin{align}
&\ll \frac{y^{1+\frac{1}{\b}}}{T^2}\bigg(\log T\int_{\a}^{\a+\frac{1}{\b}}T^{\frac{1}{2}(\s-\frac{1}{\b})}\bigg(\frac{x}{y}\bigg)^{\s}d\s+\int_{\a+\frac{1}{\b}}^{1+\a+\frac{1}{\b}}T^{-\frac{1}{2}+\s-\frac{1}{\b}}\bigg(\frac{x}{y}\bigg)^{\s}d\s\bigg) \nonumber \\
&\ll \frac{xy^{1/\b}}{T^{3/2}}\log T+\frac{x^{1+1/\b}}{T^{1/2}}\log T \nonumber
\end{align}
Similarly the contribution from right vertical line is 
\begin{align}
\ll\frac{x^{2+1/\b}}{y^{\a}}\int_{2}^{T}t^{-\frac{1}{2}+\frac{1}{\log y}}dt\ll \frac{x^{2+1/\b}}{y^{\a}}T^{\frac{1}{2}+\frac{1}{\log y}}. \nonumber
\end{align}
Note that the second integral of \eqref{c1bxy} disappears when $\b\ge 3$. Finally, replace $x$ by $x^\b$ to end the proof.
\section{Proof of Theorem \ref{maintheorem02}}
For $j \in \{1,2\}$, we let $\alpha_j$ be such that
\begin{align}{\alpha _j}= 1  + \frac{j}{{\log y}}. \nonumber \end{align}
Let $y\ge x$ and $T=x^2\log^5x$. From \eqref{cqbn} and \eqref{estfe1} we have
\begin{align}
\sum\limits_{q^ \b\leqslant x} {c_{q,\beta}(n)}  = \frac{1}{{2\pi i}}\int_{\alpha_j  - iT}^{\alpha_j  + iT} {\frac{{\sigma _{1 - s,\beta }(n)}}{{\zeta (\b s)}}\frac{{{x^s}}}{s}ds}  + O\bigg(\frac{x}{T}\s_{0,\b}(n)\log y\bigg). \nonumber
\end{align}
Note that 
\begin{align}
\frac{1}{{2\pi i}}\int_{\alpha_j  - iT}^{\alpha_j  + iT} {\frac{{\sigma _{1 - s,\beta }(n)}}{{\zeta (\b s)}}\frac{{{x^s}}}{s}ds}\ll x\s_{0,\b}(n)\log^2 T. \nonumber
\end{align}
Therefore
\begin{align}
  {\bigg( {\sum\limits_{q^b \leqslant x} {c_{q,\beta}(n)} } \bigg)^2} &= \frac{1}{{{{(2\pi i)}^2}}}\int_{{\alpha _1} - iT}^{{\alpha _1} + iT} {\int_{{\alpha _2} - 2iT}^{{\alpha _2} + 2iT} {\frac{{\sigma _{1 - {s_1},\beta}(n)}}{{\zeta ({\b s_1})}}\frac{{\sigma _{1 - {s_2},\beta}(n)}}{{\zeta ({\b s_2})}}\frac{{{x^{{s_1} + {s_2}}}}}{{{s_1}{s_2}}}d{s_2}} d{s_1}} \nonumber  \\
   &+ O\left( {\frac{{{x^{2}}}}{T}{{(\sigma _{0,\beta }(n))}^2}\log y\log^2 T} \right). \label{cb2} 
\end{align}
Combining equation \eqref{sigmap}, Lemmas \ref{shiu} and \ref{merest} we find
\begin{align}
\sum_{n<y}(\sigma _{0,\beta}(n))^2\ll  y\log^3y.\label{shortest}
\end{align}
Now sum over $n$ both sides of \eqref{cb2} so that
\begin{align}
  C_{2,\beta}(x,y) = \sum\limits_{n \leqslant y}{{{\bigg( {\sum\limits_{q \leqslant x} {c_{q,\beta}(n)} } \bigg)}^2}} &= \frac{1}{{{{(2\pi i)}^2}}}\int_{{\alpha _1} - iT}^{{\alpha _1} + iT} {\int_{{\alpha _2} - 2iT}^{{\alpha _2} + 2iT} {\frac{{G({s_1},{s_2},\b,n)}}{{\zeta ({s_1})\zeta ({s_2})}}\frac{{{x^{{s_1} + {s_2}}}}}{{{s_1}{s_2}}}d{s_2}} d{s_1}} \nonumber  \\
   &+ O\left( \frac{x^{2}y}{T}\log^4 y\log^2 T \right) \nonumber \\
&=I+  O\left( \frac{x^{2}y}{T}\log^4 y\log^2 T \right), \nonumber
\end{align}
where
\begin{align}
G({s_1},{s_2},\b ,y) = \sum\limits_{n \leqslant y} {\s_{1 - {s_1},\beta}(n)\sigma _{1 - {s_2},\beta}(n)}.  \nonumber
\end{align}
From Theorem \ref{theormsigma2} we find
\begin{align}
I=I_1+I_2+I_3+I_4+O\bigg(x^2y^{\frac{1}{3}+\frac{1}{6\b}}(\log^5y)(\log^4T)\log\log y\bigg), \nonumber
\end{align}
where
\begin{align}
I_1&=\frac{y}{(2\pi i)^2}\int_{\a_1 - iT}^{\a_1+ iT} \int_{\a_2 - 2iT}^{\a_2 + 2iT}\frac{\z(\b(s_1+s_2-1))}{\z(\b(s_1+s_2))}\frac{x^{s_1+s_2}}{s_1s_2}ds_1ds_2, \nonumber \\
I_2&=\frac{y^{1+1/\b}}{(2\pi i)^2}\int_{\a_1 - iT}^{\a_1+ iT} \int_{\a_2 - 2iT}^{\a_2 + 2iT}\frac{\z(1-s_1+1/\b)\z(1-\b s_1+\b s_2)\z(1-\b+\b s_2))}{(1+\b-\b s_1)\z(2-\b s_1+\b s_2)\z(\b s_1)\z(\b s_2)}\frac{y^{-s_1}x^{s_1+s_2}}{s_1s_2}ds_1ds_2, \nonumber \\
I_3&=\frac{y^{1+1/\b}}{(2\pi i)^2}\int_{\a_1 - iT}^{\a_1+ iT} \int_{\a_2 - 2iT}^{\a_2 + 2iT}\frac{\z(1-s_2+1/\b)\z(1-\b s_2+\b s_1)\z(1-\b+\b s_1))}{(1+\b-\b s_2)\z(2-\b s_2+\b s_1)\z(\b s_1)\z(\b s_2)}\frac{y^{-s_2}x^{s_1+s_2}}{s_1s_2}ds_1ds_2, \nonumber \\
\text{and} \nonumber \\
I_4&=\frac{y^{2+1/\b}}{(2\pi i)^2}\int_{\a_1 - iT}^{\a_1+ iT} \int_{\a_2 - 2iT}^{\a_2 + 2iT}\frac{\z(2-s_1-s_2+1/\b)\z(1+\b-\b s_1)\z(1+\b-\b s_2))}{(1+2\b-\b s_1-\b s_2)\z(2+2\b-\b s_2-\b s_1)\z(\b s_1)\z(\b s_2)} \nonumber \\
&\hspace{10cm}\times\frac{(x/y)^{s_1+s_2}}{s_1s_2}ds_1ds_2. \nonumber
\end{align}
Note that the integrals $I_2$, $I_3$, and $I_4$ disappear when $\b\ge 3$. First we will compute the integral $I_1$. Let 
\begin{align}
J_1(s_1)=\frac{1}{2\pi i}\int_{\a_2 - iT}^{\a_2+ iT} \frac{\z(\b(s_1+s_2-1))}{\z(\b(s_1+s_2))}\frac{x^{s_2}}{s_2}ds_2. \nonumber
\end{align}
Shift the line of integration from $\s=\a_2$ to $\s=1+\tfrac{1}{2\b}-\a_1$. Note that the integrand has a simple pole at $1+1/\b-s_1$ in this region. The residue is 
\begin{align}
\mathop {\operatorname{res} }\limits_{s_2=1-s_1+1/\b}\frac{\z(\b(s_1+s_2-1))}{\z(\b(s_1+s_2))}\frac{x^{s_2}}{s_2}=\frac{x^{1-s_1+1/\b}}{(1+\b-\b s_1)\z(1+\b)}. \nonumber
\end{align}
Let $T\geq x^{2/\b}$. The contribution from the horizontal line is 
\begin{align}
\ll \frac{1}{T^{\frac{1}{2}}}\int_{1-\a_1+1/2\b}^{1/\b}\bigg(\frac{x}{T^{\b/2}}\bigg)^\s d\s+\frac{\log x}{T}\int_{1/\b}^{\a_2}(x)^\s d\s\ll \frac{x}{T}+\frac{x^{1/2\b}}{T^{3/4}}. \nonumber
\end{align}
Hence 
\begin{align}
I_1=&\frac{y}{(2\pi i)^2}\int_{\a_1 - iT}^{\a_1+ iT} \int_{\frac{1}{2\b}-\frac{1}{\log y} - 2iT}^{\frac{1}{2\b}-\frac{1}{\log y} + 2iT}\frac{\z(\b(s_1+s_2-1))}{\z(\b(s_1+s_2))}\frac{x^{s_1+s_2}}{s_1s_2}ds_1ds_2 \nonumber \\
&+\frac{y}{2\pi i}\int_{\a_1-iT}^{\a_1+iT}\frac{x^{1+1/\b}}{s_1(1+\b-\b s_1)\z(1+\b)}ds_1+O\bigg(  \frac{yx^2}{T}\log x+\frac{yx^{1+1/2\b}}{T^{3/4}}\log x\bigg).\label{I1}
\end{align}
Denote the first integral in the right-hand side of \eqref{I1} by $I_{11}$. Then we have
\begin{align}
I_{11}&\ll yx^{1+1/2\b}\int_{1}^{T}\int_{1}^{2T}\frac{|\z(1/2+i\b(t_1+t_2))|}{t_1t_2}dt_1dt_2 \nonumber \\
&\ll yx^{1+1/2\b}\log^2T\int_{T/2}^{T}\int_{T}^{2T}\frac{|\z(1/2+i\b(t_1+t_2))|}{t_1t_2}dt_1dt_2 \nonumber \\
&\ll yx^{1+1/2\b}\log^2T\bigg(\int_{T/2}^{T}\int_{T}^{2T}|\z(1/2+i\b(t_1+t_2))|^2dt_1dt_2\times\int_{T/2}^{T}\int_{T}^{2T}\frac{1}{t_1^2t_2^2}dt_1dt_2\bigg)^{1/2}, \nonumber
\end{align}
where in the last step we used H\"older's inequality. By the aid of the mean value theorem of $\z(s)$ \cite[Theorem 7.3]{TitchmarshHeath-Brown} we deduce that
\begin{align}
I_{11}\ll yx^{1+1/2\b}\log^3T. \nonumber
\end{align}
Finally, by applying the residue theorem on the second integral in the right-hand side of \eqref{I1} we conclude that
\begin{align}
I_1=\frac{yx^{1+1/\b}}{(1+\b)\z(1+\b)}+O\bigg(yx^{1+1/2\b}\log^3T+\frac{yx^2}{T}\log x+\frac{yx^{1+1/2\b}}{T^{3/4}}\log x\bigg). \nonumber
\end{align}
Next we compute the integral $I_2$. Let
\begin{align}
J_2(s_2)=\frac{1}{2\pi i}\int_{\a_1 - iT}^{\a_1+ iT} \frac{\z(1-s_1+1/\b)\z(1-\b s_1+\b s_2)}{(1+\b-\b s_1)\z(2-\b s_1+\b s_2)\z(\b s_1)}\frac{y^{-s_1}x^{s_1}}{s_1}ds_1 \nonumber
\end{align}
Shift the line of integration in the $s_1$-plane from $\s=\a_1$ to $\s=\a_3=1+\tfrac{1}{\b}-\tfrac{3}{\log y}$. Note that $s_1=s_2$ is a simple pole of the integrand and the residue is 
\begin{align}
\mathop {\operatorname{res} }\limits_{s_1=s_2}\frac{\z(1-s_1+1/\b)\z(1-\b s_1+\b s_2)}{(1+\b-\b s_1)\z(2-\b s_1+\b s_2)\z(\b s_1)}\frac{y^{-s_1}x^{s_1}}{s_1}=-\frac{\z(1-s_2+1/\b)}{\b\z(2)(1+\b-\b s_2)\z(\b s_2)}\frac{(x/y)^{s_2}}{s_2}. \nonumber
\end{align}
The contribution from the horizontal lines is 
\begin{align}
\ll \frac{x^{1+1/\b}}{y^{1+1/\b}T}\log T. \nonumber
\end{align}
provided $x\le y<x^{\b+2}\log^{\frac{5}{2}(\b+1)}y$.
The contribution from the vertical line $\s=\a_3$ is
\begin{align}
\ll \frac{x^{1+1/\b}}{y^{1+1/\b}}\log T. \nonumber
\end{align}
Therefore 
\begin{align}
I_2=\frac{y^{1+1/\b}}{\b\z(2)}\frac{1}{2\pi i}\int_{\a_2-2iT}^{\a_2+2iT}&\frac{\z(1-s_2+1/\b)\z(1-\b+\b s_2)}{(1+\b-\b s_2)\z^2(\b s_2)}\frac{(x^2/y)^{s_2}}{s_2^2}ds_2 \nonumber \\
&+O\bigg(\frac{x^{2+1/\b}}{T}\log^2 T+x^{2+1/\b}\log^2 T\bigg). \nonumber
\end{align}
Next we shift the line of integration in the $s_2$-plane form $\s=\a_2$ to $\s=\a_4=1+\a_2+\tfrac{1}{\b}$. Note that $s_2=1+\tfrac{1}{\b}$ is a simple pole and the residue is 
\begin{align}
\mathop {\operatorname{res} }\limits_{s_2=1+1/\b}\frac{y^{1+1/\b}\z(1-s_2+1/\b)\z(1-\b+\b s_2)}{\b\z(2)(1+\b-\b s_2)\z^2(\b s_2)}\frac{(x^2/y)^{s_2}}{s_2^2}=\frac{x^{2+2/\b}}{2(1+\b)^2\z^2(1+\b)}. \nonumber
\end{align}
If we split the interval $(\a_2,1+\a_2+1/\b)$ into two subintervals $(\a_2,1+1/b)$ and $(1+1/b,1+a_2+1/\b)$, then the horizontal line integration is 
\begin{align}
\ll(y^{-1}x^{2})^{1+1/\b}T^{-5/2}+(y^{-1}x^{2})^{2+1/\b}T^{-2}. \nonumber
\end{align}
The vertical line integration is 
\begin{align}
\ll y^{-1}x^{4+2/\b}. \nonumber
\end{align}
To bound the integral $I_3$ we move the line of integration in $s_2$ plane from $\s=\a_2$ to $\s=\a_2+\tfrac{1}{\b}$. The contribution from the horizontal line is 
\begin{align}
\ll \frac{xy^{1+1/\b}}{T^2}\log^2 T\int_{\a_2}^{a_2+1/\b}T^{\frac{1}{2}(\b\s+\s-1/\b)}(x/y)^{\s}d\s\ll \frac{x^{2+1/\b}}{T}\log^2T, \nonumber
\end{align} 
provided that $x\le y<x^{\b+2}\log^{\frac{5}{2}(\b+1)}y$. If $\a_5=\a_2+1/\b$, then the contribution from the left vertical line is 
\begin{align}
\ll x^{2+1/\b}\int_{2}^{T}\int_{2}^{T}\frac{\sqrt{t_2(t_2-t_1)}}{t_2^2t_1}dt_2dt_1\ll x^{2+1/\b}\log^2 T. \nonumber
\end{align} 
Similarly if one moves the line of integration in $s_2$-plane from $\s=\a_2$ to $\s=1+\tfrac{1}{\b}-\frac{4}{\log x}$, then it can be shown that
\begin{align}
I_4\ll x^{2+1/\b}\log^2 T. \nonumber
\end{align}
Now we complete the proof of the theorem by replacing $x$ by $x^\b$.
\section*{Appendix}
It is worth remarking (see \cite{ChanKumchev}) that the introduction of van der Corput's method of exponential sums leads to the following result concerning the first moment.
\begin{theorem} \label{appendixtheorem}
Let $\beta \in \mathbb{N}$ be fixed. 
 Let $x$ be a large real and $y \ge x^{\beta}$. One has
\begin{align} 
C_{1,\beta }(x,y) = y - \frac{{{x^{1 + \beta }}}}{{2(1 + \beta )\zeta (1 + \beta )}} + R_{1,\beta}(x,y), \nonumber 
\end{align}
where 
\begin{equation}
R_{1,\beta}(x,y)\ll_{\beta}
\begin{cases}
xy^{\frac{1}{3}}\log x + x^3y^{ - 1}, & \mbox{ if } \beta=1,\\
x^{\frac{1 + 2\beta }{3}}y^{\frac{1}{3}} + x^{1 + 2\beta }y^{ - 1}, & \mbox{ if } \beta>1. \nonumber
\end{cases}
\end{equation}
\end{theorem}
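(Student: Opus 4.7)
The plan is to follow the strategy of Chan and Kumchev \cite{ChanKumchev} (who treat the case $\beta = 1$) and generalize it to arbitrary $\beta$. Starting from the M\"obius identity of Lemma \ref{cohen02}, one swaps the order of summation in the definition of $C_{1,\beta}(x,y)$ to obtain
\begin{align*}
C_{1,\beta}(x,y) \;=\; \sum_{d \leq x} d^\beta M(x/d) \left\lfloor y/d^\beta \right\rfloor,
\end{align*}
where $M(t) := \sum_{k \leq t} \mu(k)$ is the Mertens function. Writing $\lfloor t \rfloor = t - \tfrac{1}{2} - \psi(t)$ with $\psi(t) := \{t\} - \tfrac{1}{2}$ the sawtooth function, and using the elementary identity $\sum_{d \leq x} M(x/d) = \sum_{dk \leq x}\mu(k) = 1$, this decomposes as
\begin{align*}
C_{1,\beta}(x,y) \;=\; y \;-\; \tfrac{1}{2} \sum_{d \leq x} d^\beta M(x/d) \;-\; R,
\end{align*}
where $R := \sum_{d \leq x} d^\beta M(x/d)\, \psi(y/d^\beta)$. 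By \eqref{dbmu}, the middle sum equals $\frac{x^{1+\beta}}{(1+\beta)\zeta(1+\beta)} + O(x^\beta \log^{\lfloor 1/\beta \rfloor} x)$, producing the second main term $-\frac{x^{1+\beta}}{2(1+\beta)\zeta(1+\beta)}$ together with a remainder absorbed by the advertised error under the hypothesis $y \geq x^\beta$.

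The bulk of the work is to estimate $R$. I would swap order once more to write $R = \sum_{k \leq x} \mu(k)\, S(y;\, x/k)$ with $S(y;X) := \sum_{d \leq X} d^\beta \psi(y/d^\beta)$, and then replace $\psi$ by its truncated Fourier expansion
\begin{align*}
\psi(t) \;=\; -\sum_{1 \leq |h| \leq H} \frac{e^{2\pi i h t}}{2\pi i h} \;+\; O\!\left(\min\!\left(1, \frac{1}{H \|t\|}\right)\right),
\end{align*}
for a parameter $H$ to be chosen at the end. This reduces the task to estimating the exponential sums $\sum_{D < d \leq 2D} d^\beta e^{2\pi i hy/d^\beta}$ on dyadic intervals $D \leq X$. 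The phase $F(t) = hy/t^\beta$ satisfies $|F'(t)| \asymp hy/t^{\beta+1}$ and $|F''(t)| \asymp hy/t^{\beta+2}$, so van der Corput's B-process (second-derivative test) yields, in the balanced regime,
\begin{align*}
\sum_{D < d \leq 2D} d^\beta e^{2\pi i h y /d^\beta} \;\ll\; (hy)^{1/2} D^{\beta/2},
\end{align*}
whereas in the highly oscillatory regime the first-derivative (A-process) test gives $\ll D^{2\beta+1}/(hy)$. Collapsing dyadically over $D \leq X$, summing over $h \leq H$ with weight $1/h$, and combining with the truncation error $O(X^{1+\beta}/H)$ (up to logarithmic factors), the balanced choice $H \asymp X^{(2+\beta)/3}/y^{1/3}$ produces
\begin{align*}
S(y;X) \;\ll\; X^{(1+2\beta)/3}\, y^{1/3}\, \log X \;+\; X^{1+2\beta}/y.
\end{align*}

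To finish, one sums over $k$ with $|\mu(k)| \leq 1$:
\begin{align*}
|R| \;\ll\; y^{1/3} x^{(1+2\beta)/3} \sum_{k \leq x} k^{-(1+2\beta)/3} \;+\; \frac{x^{1+2\beta}}{y} \sum_{k \leq x} k^{-(1+2\beta)}.
\end{align*}
For $\beta = 1$ the first exponent equals $1$ and the $k$-sum contributes an extra $\log x$, yielding the error $xy^{1/3}\log x$; for $\beta > 1$ the exponent exceeds $1$, the $k$-sum is bounded, and one obtains $x^{(1+2\beta)/3}y^{1/3}$. The second $k$-sum is always convergent and contributes $x^{1+2\beta}/y$ in every case. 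This matches the announced shape of $R_{1,\beta}(x,y)$.

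The main technical obstacle is verifying that the B-process and A-process thresholds can be handled uniformly across the full dyadic decomposition so that summing the two van der Corput regimes produces no extra logarithmic losses beyond what the theorem allows. The constraint $y \geq x^\beta$ is precisely what ensures that $|F''|$ has the right order for the B-process to be productive in the principal range $D \asymp X$, and the careful balance of the Fourier truncation parameter $H$ against the B-process contribution is what produces the clean exponents $(1+2\beta)/3$ and $1+2\beta$ appearing in the theorem.
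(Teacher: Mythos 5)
Your proposal is correct and follows essentially the same route as the paper: the same decomposition $\lfloor t\rfloor = t - \tfrac{1}{2} - \psi(t)$ after interchanging summation, the same identification of the main terms $y$ and $-x^{1+\beta}/(2(1+\beta)\zeta(1+\beta))$ via \eqref{dbmu}, and the same reduction of the remainder to dyadic sums of $d^{\beta}\psi(y/d^{\beta})$ followed by summation over $k$ with the $\log x$ appearing exactly when $(1+2\beta)/3=1$. The only cosmetic difference is that you unpack the truncated Fourier expansion of $\psi$ and van der Corput's second-derivative test by hand, whereas the paper invokes the same machinery packaged as the exponent pair $(\tfrac{1}{2},\tfrac{1}{2})$ through Lemma \ref{exppairlemma}.
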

We remark that the range of $y$ is different than the one in Theorem \ref{maintheorem} and that when $\beta=1$, \eqref{chankumchev1} follows as a special case.
\\
To prove this, we recall the following 
auxiliary lemma from \cite[Lemma 4.3]{GrahamKolesnikvanderCorput}.
For the definition of an exponent pair, the reader is referred to \cite[pp. 30-31]{GrahamKolesnikvanderCorput}.
\begin{lemma} \label{exppairlemma}
Suppose that $(k,l)$ is an exponent pair and $\mathbf{I}$ is a subinterval of $(N,2N]$, then
\[\sum\limits_{n \in \mathbf{I}} {\psi (y/n)}  \ll {y^{k/(k + 1)}}{N^{((l-k)/(k + 1)}} + {y^{ - 1}}{N^2},\]
where $\psi(t) := t - [t] - \tfrac{1}{2}$ denotes the saw-tooth function. Here $[t]$ stands for the integral part of $t$. 
\end{lemma}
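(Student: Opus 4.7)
The plan is to expand $\psi(y/n)$ as a truncated Fourier series and estimate the resulting exponential sums via the exponent pair hypothesis. For a parameter $H \geq 1$ to be chosen, I would use a Vaaler-type approximation
\[
\psi(t) = -\sum_{1 \leq |h| \leq H} \frac{e(ht)}{2\pi i h} + O\Bigl(\min\bigl(1, \tfrac{1}{H\|t\|}\bigr)\Bigr),
\]
with coefficients of size $\ll 1/h$. A standard estimate (after a light smoothing of the approximation) gives
\[
\sum_{n \in \mathbf{I}} \min\bigl(1, \tfrac{1}{H\|y/n\|}\bigr) \ll \frac{N}{H},
\]
so the Fourier error contributes at most $N/H$ to the total.

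For each $1 \leq h \leq H$ I would then estimate $\sum_{n \in \mathbf{I}} e(hy/n)$ by applying the hypothesis that $(k,l)$ is an exponent pair to the phase $f(x) = hy/x$ on $(N, 2N]$. Since $f^{(r)}(x) \asymp hy/N^{r+1}$, the relevant size parameter is $F = hy/N$, and combining the defining property of the exponent pair with the first-derivative (Kusmin--Landau) contribution yields
\[
\sum_{n \in \mathbf{I}} e(hy/n) \ll F^k N^{l-k} + \frac{N}{F} = h^k y^k N^{l-2k} + \frac{N^2}{hy}.
\]

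Weighting by $1/h$ and summing over $1 \leq h \leq H$ gives
\[
\sum_{1 \leq h \leq H} \frac{1}{h}\left(h^k y^k N^{l-2k} + \frac{N^2}{hy}\right) \ll H^k y^k N^{l-2k} + \frac{N^2}{y},
\]
so together with the Fourier error the overall estimate is $H^k y^k N^{l-2k} + N/H + N^2/y$. Optimising the first two terms by choosing $H$ so that $H^{k+1} = N^{1-l+2k}/y^k$ makes both equal to $y^{k/(k+1)} N^{(l-k)/(k+1)}$, while the third term contributes exactly the $y^{-1}N^2$ in the statement. The main obstacle will be to obtain the Fourier-approximation error cleanly as $N/H$ without stray logarithmic factors, and to verify that the regime $F < 1$ (small $h$, where the exponent pair inequality degenerates) is absorbed into the $N/F$ contribution that produced the $y^{-1}N^2$ term; both are handled by passing to Vaaler's smooth kernel and splitting the $h$-sum at $h \asymp N/y$ before applying the bounds.
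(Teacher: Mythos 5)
There is nothing in the paper to compare against here: the authors do not prove this lemma at all, but quote it verbatim from \cite[Lemma 4.3]{GrahamKolesnikvanderCorput}. What you have written is, in all essentials, the standard proof of that cited result: Vaaler's trigonometric approximation to $\psi$, an exponent-pair estimate together with the Kusmin--Landau first-derivative bound for each harmonic $\sum_{n\in\mathbf{I}} e(hy/n)$, summation over $h$ with weight $1/h$, and optimization in $H$. Your bookkeeping is correct: the per-harmonic bound $h^k y^k N^{l-2k} + N^2/(hy)$ is the right one, the choice $H^{k+1} = N^{1+2k-l}y^{-k}$ balances $H^k y^k N^{l-2k}$ against $N/H$ at $y^{k/(k+1)}N^{(l-k)/(k+1)}$, and the first-derivative contribution sums to $O(y^{-1}N^2)$.

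A few details need repair, though none is fatal. First, your crossover is misplaced: the exponent-pair inequality applies when the derivative $|f'| \asymp hy/N^2$ is at least $1$, i.e.\ for $h \gg N^2/y$, so the degenerate regime is $h \ll N^2/y$, not $F = hy/N < 1$ (i.e.\ $h \ll N/y$) as you state --- a factor-of-$N$ discrepancy. This is harmless only because your combined bound $F^kN^{l-k} + N/F$ is valid for every $h$: the exponent pair justifies the first term when $|f'|\ge 1$, and Kusmin--Landau justifies the second when $|f'| < 1$ (with the usual shift $f \mapsto f+n$, or the second-derivative test, to deal with $|f'|\asymp 1$). Second, the claim $\sum_{n\in\mathbf{I}}\min\bigl(1, 1/(H\|y/n\|)\bigr) \ll N/H$ is not an unconditional ``standard estimate''; it fails if many of the $y/n$ cluster near integers, and must instead be derived by expanding Vaaler's majorant in its own finite Fourier series, whose nonzero frequencies are then absorbed by the same exponential-sum bounds --- you flag this correctly, so it is a presentational rather than a mathematical gap. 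Third, two degenerate cases deserve a sentence: if the optimizing $H$ is less than $1$, then $y^{k/(k+1)}N^{(l-k)/(k+1)} \ge N$ and the claimed bound is trivial since $|\psi|\le \tfrac12$; and if $k=0$, the sum $\sum_{h\le H}h^{k-1}$ produces $\log H$ rather than $O(H^k)$, which matters only for the trivial pair $(0,1)$, where the bound is again trivial.
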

By the use of \eqref{cohenmoebius} we have 
\[C_{1,\beta}(x,y) = \sum\limits_{n \leqslant y} {\sum\limits_{q \leqslant x} {c_q^{(\beta )}(n)} }  = \sum\limits_{n \leqslant y} {\sum\limits_{q \leqslant x} {\sum_{\substack{d|q \\ {d^\beta |n }}} {{d^\beta }\mu \left( {\frac{q}{d}} \right)} } }  = \sum\limits_{n \leqslant y} {\sum_{\substack{dk \leqslant x \\ d^\beta |n}} {{d^\beta }\mu (k)} } \]
where we have made the change $k= \tfrac{q}{d}$. Interchanging the order of summation we obtain
\begin{align}
  C_{1,\beta}(x,y) &= \sum\limits_{dk \leqslant x} {d^\beta }\mu (k)\sum_{\substack{n \le y \\ d^\beta | n}} {1 }  = \sum\limits_{dk \leqslant x} {{d^\beta }\mu (k)\left[ {\frac{y}{{{d^\beta }}}} \right]}  \nonumber \\
   &= y\sum\limits_{dk \leqslant x} {\mu (k)}  - \frac{1}{2}\sum\limits_{dk \leqslant x} {{d^{ \beta }}\mu (k)}  - \sum\limits_{dk \leqslant x} {{d^\beta }\mu (k)\psi \left( {\frac{y}{{{d^\beta }}}} \right)}  \nonumber \\
   &= C_{1,\beta,1}(x,y) + C_{1,\beta,2}(x,y) + C_{1,\beta,3}(x,y), \nonumber
\end{align}
say, and where we have used the definition of $\psi(t)$.
By using \eqref{dbmu} and setting
\begin{equation}\label{exn}
\mathcal{E}(x,n) := 
\begin{cases}
\log x, & \mbox{ if } n=1,\\
1, & \mbox{ if } n>1, \nonumber
\end{cases}
\end{equation}
we can conclude that 
\[C_{1,\beta,2}(x,y) =  - \frac{1}{2}\sum\limits_{dk \leqslant x} {{d^\beta }\mu (k)}  =  - \frac{{{x^{\beta  + 1}}}}{{2(1 + \beta )\zeta (1 + \beta )}} + O({x^\beta }{\mathcal{E}}(x,\beta )).\]
The first sum is independent of $\beta$ since we see that
\begin{align}
  C_{1,\beta,1}(x,y) = y\sum\limits_{m \leqslant x} {\sum\limits_{k|m} {\mu (k)} } = y. \nonumber 
\end{align}
Thus, it remains to compute $C_{1,3}^{(\beta )}(x,y)$ and this will require more effort. We begin by noting that
\[C_{1,\beta,3}(x,y) = \sum\limits_{dk \leqslant x} {{d^\beta }\mu (k)\psi \left( {\frac{y}{{{d^\beta }}}} \right)}  = \sum\limits_{k \leqslant x} {\mu (k)\sum\limits_{d \leqslant x/k} {{d^\beta }\psi \left( {\frac{y}{{{d^\beta }}}} \right)} }. \]
Furthermore, we define the intervals ${I_j}: = ({N_j},2{N_j}]$ where ${N_j} = {N_{j,k}} = \tfrac{x}{k}{2^{ - j}}$ so that $1 \leqslant \tfrac{x}{k}{2^{ - j}}$ implies that $ j \ll \log x$. We may now write
\begin{align}
  C_{1,\beta,3}(x,y) &= \sum\limits_{k \leqslant x} {\mu (k)\sum\limits_{j = 1}^\infty  {\sum\limits_{d \in {I_j}} {{d^\beta }\psi \left( {\frac{y}{{{d^\beta }}}} \right)} } }  = \sum\limits_{k \leqslant x} {\mu (k)\sum\limits_{j \ll \log x} {\sum\limits_{d \in {I_j}} {{d^\beta }\psi \left( {\frac{y}{{{d^\beta }}}} \right)} } }  \nonumber \\
   &\leqslant \sum\limits_{k \leqslant x} {\sum\limits_{j \ll \log x} {\bigg| {\sum\limits_{d \in {I_j}} {{d^\beta }\psi \left( {\frac{y}{{{d^\beta }}}} \right)} } \bigg|} }.  \nonumber
\end{align}
The next step is to apply Abel summation to the inner sum to obtain
\begin{align}
  \sum\limits_{d \in {I_j}} {{d^\beta }\psi \left( {\frac{y}{{{d^\beta }}}} \right)} &= \sum\limits_{d \leqslant 2{N_j}} {{d^\beta }\psi \left( {\frac{y}{{{d^\beta }}}} \right)}  \nonumber \\
   &= {2^\beta }N_j^\beta \sum\limits_{d \leqslant 2{N_j}} {\psi \left( {\frac{y}{{{d^\beta }}}} \right)}  - \int_1^{2{N_j}} {\sum\limits_{d \leqslant t} {\psi \left( {\frac{y}{{{d^\beta }}}} \right)\beta {t^{\beta  - 1}}dt} }  \nonumber \\
   &\ll N_j^\beta \bigg|\sum\limits_{d \leqslant 2{N_j}} {\psi \left( {\frac{y}{{{d^\beta }}}} \right)} \bigg| + \mathop {\sup }\limits_{1 \leqslant t \leqslant 2{N_j}} \bigg|\sum\limits_{d \leqslant t} {\psi \left( {\frac{y}{{{d^\beta }}}} \right)} \bigg|(N_j^\beta  - 1) \nonumber \\
   &\ll N_j^\beta \bigg|\sum\limits_{d \leqslant 2{N_j}} {\psi \left( {\frac{y}{{{d^\beta }}}} \right)} \bigg|. \nonumber 
\end{align}
Therefore we are left with
\[\sum\limits_{d \in {I_j}} {{d^\beta }\psi \left( {\frac{y}{{{d^\beta }}}} \right)}  \ll N_j^\beta \mathop {\sup }\limits_\mathbf{I} \bigg|\sum\limits_{d \in \mathbf{I}} {\psi \left( {\frac{y}{{{d^\beta }}}} \right)} \bigg|,\]
where the supremum is over all subintervals $\mathbf{I}=\{ I_j, \; j=1,\cdots,+\infty \}$. Thus, we have
\[C_{1,\beta,3}(x,y) \ll \sum\limits_{k \leqslant x} {\sum\limits_{j = 1}^\infty  {N_j^\beta \mathop {\sup }\limits_{\mathbf{I}} \bigg| {\sum\limits_{n \in {\mathbf{I}}} {\psi \left( {\frac{y}{{{n^\beta }}}} \right)} } \bigg|} }, \]
where we recall that the sum over $j$ is finite and has $O(\log x)$ terms. Now we use Lemma \ref{exppairlemma}. By taking $k=l=\tfrac{1}{2}$ and seeing that $f(n) = y/n^{\beta} \in \mathbf{F}(N,\infty,\beta+1,y,\varepsilon)$ the exponent pair estimate we need is
\[\sum\limits_{n \in \mathbf{I}} {\psi \left( {\frac{y}{{{n^\beta }}}} \right)}  \ll {y^{\tfrac{1}{3}}}N_j^{\tfrac{{1 - \beta }}{3}} + {y^{ - 1}}N_j^{1 + \beta }.\]
Consequently we have
\begin{align}
  C_{1,\beta,3}(x,y) &\ll \sum\limits_{k \leqslant x} {\sum\limits_{j = 0}^\infty  {N_j^\beta ({y^{\tfrac{1}{3}}}N_j^{\tfrac{{1 - \beta }}{3}} + {y^{ - 1}}N_j^{1 + \beta })} } \nonumber \\ 
	&= \sum\limits_{k \leqslant x} {\sum\limits_{j = 0}^\infty  {({y^{\tfrac{1}{3}}}N_j^{\tfrac{{1 + 2\beta }}{3}} + {y^{ - 1}}N_j^{1 + 2\beta })} }  \nonumber \\
   &\ll \sum\limits_{k \leqslant x} {\bigg( {{y^{\tfrac{1}{3}}}\frac{{{x^{\tfrac{{1 + 2\beta }}{3}}}}}{{{k^{\tfrac{{1 + 2\beta }}{3}}}}} + {y^{ - 1}}\frac{{{x^{1 + 2\beta }}}}{{{k^{1 + 2\beta }}}}} \bigg)} \nonumber \\
	&= {x^{\tfrac{{1 + 2\beta }}{3}}}{y^{\tfrac{1}{3}}}\mathcal{E}\left(x,\tfrac{{1 + 2\beta }}{3}\right) + {x^{1 + 2\beta }}{y^{ - 1}}. \nonumber
\end{align}
This completes the proof.
\section{Acknowledgements}
The first author wishes to acknowledge partial support of SNF grant $200020$-$149150 \backslash 1$.

\end{document}